\newtheorem{assumption}{Assumption}
\newcommand{\la}{\lambda}
\newcommand{\ppp}{\partial}
\newcommand{\www}{\widetilde}
\newcommand{\pppa}{\partial_t^{\alpha}}
\newcommand{\R}{\mathbb{R}}
\newcommand{\N}{\mathbb{N}}
\newcommand{\ooo}{\overline}
\newcommand{\OOO}{\Omega}
\newcommand{\xxee}{\xi,\eta}
\journalname{Fract. Calc. Appl. Anal.} %%%%% and FCAA-logo will appear on the right %%
\begin{document}

 %%% Pls. specify the kind of the article:
 %%% This is: %% ORIGINAL PAPER %%%% 
 %%% or: REVIEW PAPER / SHORT PAPER, etc. %%%%%%%%

%%%% Title of article for FCAA %%%%%%%%%%%%%%%%%%%%%%%%
\title{Global and local existence of solutions for 
nonlinear systems of time-fractional diffusion 
equations}

\titlerunning{Global and local existence of solutions for 
nonlinear systems  \dots}
%% if too long for running head - use the text from 1st line !%

%%%%% authors:
\author{
        Dian Feng$^1$ 
\and
        Masahiro Yamamoto$^{2,3}$ 
 }

\authorrunning{D. Feng \and M. Yamamoto} %%  Short form of author list % if too long for running head

%%% affiliations:
\institute{Dian Feng$^{1,*}$
\at
1. School of Mathematical Sciences, Fudan University, Shanghai 200433, China \\
\email{dfeng19@fudan.edu.cn} $^*$ corresponding author %%% obligatory indicate who is corresp.author
 \and
Masahiro Yamamoto$^{2,3}$
\at
2. Graduate School of Mathematical Sciences, The University of Tokyo, 3-8-1 Komaba, Meguro-ku, Tokyo 153-8914, Japan
\and
3. Department of Mathematics, Faculty of Science, Zonguldak B\"ulent Ecevit University, Zonguldak 67100, T\"urkiye\\
\email{myama@ms.u-tokyo.ac.jp}
}

%%% Dates:
\date{Received: XX 2024 / Revised: ... / Accepted: ......}
% These dates will be entered by the Editor (EiC, V. Kiryakova) or authors/ system %

%%%%% For Production Dept.: variants of COPYRIGHT notice to appear: %%%%%%
% if Open Access option chosen, it is as:
   %% "\copyright The Author(s) 2022" % (or next year..) %
% if No Open Access, the TCA form signed to FCAA journal is used, and it appears:
   %% "\copyright Diogenes Co. Ltd. 2022" %% or 2021, or next year %%%%%
%%%%%%%%%%%%%%%%%%%%%%%%%%%%%%%%%%%%%%%%%%%%%%%%%%%%%%%%%%%%%%%%%%%%%%%%%%

\maketitle

%%%%%%%%%%%%%%%%%%%%%%%%%%%%%%%%%%%%%%%%%%%%%%%%%%%%%%
\begin{abstract}
In this paper, we consider initial-boundary value problems for two-component
nonlinear systems of time-fractional diffusion equations with 
the homogeneous Neumann boundary condition and non-negative initial values.
The main results are the existence of solutions global in time and the blow-up.
Our approach involves the truncation of the nonlinear terms, which enables us 
to handle all local Lipschitz continuous nonlinear terms, provided their sum is less than or 
equal to zero. 
By employing a comparison principle for the corresponding linear system, 
we establish also the non-negativity of the nonlinear system. 

%%%%%%%%%% Enter suitable key words and phrases %%% examples:
\keywords{Nonlinear time-fractional system \and weak solution\and 
global existence\and blow-up}

\subclass{26A33 (primary) \and  33E12 \and 34A08 \and 34K37 \and 35R11 \and 60G22 \and \dots}
%%% These are examples only. Pls. use MSC 2020 for suitable topic numbers %%%%%%

\end{abstract} %%%%%%%%%%%%%

%%%%%%%% begin papers' body %%%%%%%%%%%%%%%%%%%%%%%%%%%%%

%%%%%%%%%%%% Section 1 %%%%%%%%%%%%%%%%%%%%%%%%%%
\section{Introduction} \label{sec:1}
\setcounter{section}{1} \setcounter{equation}{0} %% to have proper 2-digits numbers of eqs
%% Note that this style produces 1-digit numbering of definitons, statements, exmaples, etc.

Let $\partial_t^{\alpha}$ be the fractional derivative of order 
$\alpha\in (0,1)$ defined on the fractional Sobolev spaces, which 
is defined as the closure of the classical Caputo derivative 
$$
d_t^{\alpha}v(t) := \frac{1}{\Gamma(1-\alpha)}
\int^t_0 (t-s)^{-\alpha}\frac{dv}{ds}(s) ds, \quad\mbox{ for }
v\in C^1[0,T] \mbox{ satisfying } v(0) = 0
$$  
(see Section \ref{Sec:well} for the details).  
Let $\Omega\subset \mathbb{R}^d, 
d=1,2,3$ be a bounded domain with smooth boundary $\partial \Omega$.
Moreover by $\nu$ we denote the unit outward normal vector to 
$\partial \Omega$ at $x\in \partial \Omega$. 

We consider an initial-boundary value problem 
for the following nonlinear system of time-fractional equations
with the homogeneous Neumann boundary condition
\begin{equation}\label{Eq:nonlin_system}
\left\{\begin{array}{l}
\partial_t^{\alpha} (u-a) = \Delta u + f(u,v),\quad x\in \Omega, 0< t < T, \\ 
\partial_t^{\alpha} (v-b) = \Delta v + g(u,v),\quad x\in \Omega, 0< t < T, \\
\partial_{\nu} u = \partial_{\nu} v = 0, \quad \text{ on } \partial 
\Omega\times (0,T),
\end{array}\right.
\end{equation}
where we assume some conditions on the nonlinear terms $f$ and $g$, as 
Assumption \ref{Ass:nonlin_term} in Section \ref{Sec:well} describes.

When $\alpha=1$, the system is reduced to a classical 
reaction-diffusion system,  and can be regarded as a special case of 
the Klausmeier-Gray-Scott model \cite{Kla,WS}. This system is considered to 
be a model equation for the vegetation pattern formation, which describes the 
self-organization of
vegetation spatial patterns resulting from the interaction between water source
distribution and plant growth.
In \cite{Pi} and \cite{Pi2}, it was proved that a weak solution in $L^1$ 
exists in the case where the coefficients in front 
of the spatial diffusion terms are not necessarily equal. 
However for the our fractional derivative case $0<\alpha<1$, we can not
apply the same technique as for $\alpha = 1$, so that we have to assume
that the coefficients of spatial diffusion terms are equal, that is,
$1$.  It should be a future work to discuss more general elliptic operators
in \eqref{Eq:nonlin_system}.

For $0<\alpha<1$, in contrast to the traditional reaction-diffusion process, we introduce a fractional order in the time derivative, indicating that the interaction between vegetation growth and water is influenced by soil medium heterogeneity. 
The fractional derivatives enable the representation of time memory effects, characterized by their nonlocal properties. 
Although there has been some research on various nonlinear fractional-order equation or coupled linear fractional-order equation systems, as seen in \cite{BP,LHL,FLY}, to the best of the authors' knowledge, there has been 
no investigation concerning nonlinear fractional-order systems.

Our approach involves the truncation of the nonlinear terms, which is 
inspired by Pierre \cite{Pi,Pi2}.
Through a comparison principle and a compact mapping property, 
we establish a convergent sequence of non-negative functions for the linear 
case. 
Moreover, $u+v$ has the energy estimation in the sense of $L^1$-norm, 
implying the convergence of the sequence in the $L^1$ sense. 
Ultimately, we obtain weak solutions in the $L^1$ norm.

The rest of this paper is organized as follows. In Section \ref{Sec:well}, we present the result regarding the global existence of weak solutions to the initial-boundary value problems for the nonlinear system of time-fractional diffusion equations.
Section \ref{Sec:lemma} is devoted to proving two key lemmas, which serve as the basis for the proofs of the main results. 
These lemmas establish the non-negativity of solution to 
the linear case, provided that the initial value and source term are non-negative, and 
demonstrate that the mapping from initial value and non-homogeneous term 
to solution of the corresponding linear system, is compact 
with suitable norms. 
Section \ref{Sec:thm_proof} completes the proof of the first main result. 
Section \ref{Sec:blow} discusses the blow-up of solutions under other conditions on the
nonlinear terms, which rejects the global 
existence in time, in general.  Section \ref{Sec:concl} provides some conclusions and remarks.

%%%%%%%%%%%%%%% Section 2 %%%%%%%%%%%%%%%%%%%%%%%
 \section{Well-posedness results} \label{Sec:well}
 \setcounter{section}{2} \setcounter{equation}{0} %% to have proper 2-digits numbers of eqs
%% Note that this style produces 1-digit numbering of definitons, statements, exmaples, etc.
 
In this section, we deal with the following initial-boundary value problem for the nonlinear system of time-fractional equations \eqref{Eq:nonlin_system} with the time-fractional derivative of order $\alpha \in (0,1)$
\begin{equation}\label{Eq:nonlin_system_bdy}
\left\{\begin{array}{l}
\partial_t^{\alpha} (u-a) = \Delta u + f(u,v),\quad x\in \Omega,\ 0< t < T, 
                                    \\ 
\partial_t^{\alpha} (v-b) = \Delta v + g(u,v),\quad x\in \Omega,\ 0< t < T,\\
\partial_{\nu} u = \partial_{\nu} v = 0,\quad \text{ on } \partial \Omega\times (0,T),
\end{array}\right.
\end{equation}
along with the initial condition \eqref{Eq:initi} formulated below.

For $\alpha\in (0,1)$, let $\mathrm{d}^{\alpha}_t$ denote the classical Caputo derivative:
$$
\mathrm{d}^{\alpha}_t w(t):=\int_0^t \frac{(t-s)^{-\alpha}}{\Gamma(1-\alpha)} \frac{\mathrm{d}w}{\mathrm{d}s}(s) \mathrm{d}s,\ w\in W^{1,1}(0,T).
$$
Here $\Gamma(\cdot)$ denotes the gamma function. 
For a consistent treatment of nonlinear time-fractional diffusion equations, we extend the classical Caputo derivative $\mathrm{d}^{\alpha}_t$ as follows. First of all, we define the Sobolev-Slobodeckij space $H^{\alpha}(0,T)$ with the norm $\|\cdot\|_{H^{\alpha}(0,T)}$ for $0<\alpha<1$:
$$
\|w\|_{H^{\alpha}(0,T)}:=\left( \|w\|_{L^2(0,T)} + \int_0^T\int_0^T \frac{|w(t)-w(s)|^2}{|t-s|^{1+2\alpha}} \mathrm{d}t\mathrm{d}s \right)^{\frac{1}{2}}
$$
(e.g., Adams\cite{Ad}). Furthermore, we set $H^0(0,T):=L^2(0,T)$ and
$$
H_{\alpha}(0, T):=\left\{\begin{array}{ll}H^{\alpha}(0, T), & 0<\alpha<\frac{1}{2}, \\ \left\{w \in H^{\frac{1}{2}}(0, T) ; \int_{0}^{T} \frac{|w(t)|^{2}}{t} \mathrm{~d} t<\infty\right\}, & \alpha=\frac{1}{2}, \\ \left\{w \in H^{\alpha}(0, T) ; w(0)=0\right\}, & \frac{1}{2}<\alpha \leq 1\end{array}\right.
$$
with the norms defined by
$$
\|w\|_{H_{\alpha}(0, T)}:=\left\{\begin{array}{ll}\|w\|_{H^{\alpha}(0, T)}, & \alpha \neq \frac{1}{2}, \\ \left(\|w\|_{H^{\frac{1}{2}(0, T)}}^{2}+\int_{0}^{T} \frac{|w(t)|^{2}}{t} \mathrm{~d} t\right)^{\frac{1}{2}}, & \alpha=\frac{1}{2}.
\end{array}\right.
$$
Moreover, for $\beta>0$, we set
$$
J^{\beta}w(t):=\int^t_0 \frac{(t-s)^{\beta-1}}{\Gamma(\beta)}w(s)\mathrm{d}s,\ 0<t<T, w\in L^1(0,T).
$$
It was proved, for instance, in Gorenflo, Luchko, and Yamamoto\cite{GLY}, that the operator $J^{\alpha}: L^2(0,T)\to H_{\alpha}(0,T)$ is an isomorphism for $\alpha\in(0,1)$.

Now we are ready to give the definition of the extended Caputo derivative
$$
\partial_t^{\alpha}:=\left( J^{\alpha} \right)^{-1},\ \mathcal{D}(\partial_t^{\alpha})=H_{\alpha}(0,T).
$$
Henceforth $\mathcal{D}(\cdot)$ represents the domain of an operator under consideration.
It has been demonstrated that $\partial_t^{\alpha}$ represents the minimal closed extension of $\mathrm{d}^{\alpha}_t$, where $\mathcal{D}(\mathrm{d}^{\alpha}_t):=\{ v\in C^1[0,T];v(0)=0 \}$. The relation $\partial_t^{\alpha} w = \mathrm{d}^{\alpha}_t w$ is maintained for $w\in C^1[0,T]$ with $w(0)=0$. 
For further details, we can refer to Gorenflo et al. \cite{GLY} 
and Yamamoto\cite{Y4}.

Now we will define initial condition for problem \eqref{Eq:nonlin_system_bdy} 
as follows:
\begin{equation}\label{Eq:initi}
u(x,\cdot)-a(x) \in H_{\alpha}(0,T), v(x,\cdot)-b(x), \in H_{\alpha}(0,T)\ \text{ for almost all } x\in \Omega
\end{equation}
and write down a complete formulation of an initial-boundary value problem for the nonlinear system of time-fractional equations \eqref{Eq:nonlin_system}:
\begin{equation}\label{Eq:nonlin_system_formu}
\left\{\begin{array}{l}
\partial_t^{\alpha} (u-a) = \Delta u +f(u,v),\quad x\in \Omega, 0< t < T, \\ 
\partial_t^{\alpha} (v-b) = \Delta v +g(u,v),\quad x\in \Omega, 0< t < T,\\
\partial_{\nu} u = \partial_{\nu} v = 0,\quad \text{ on } \partial \Omega\times (0,T),\\
u(x,\cdot)-a(x), v(x,\cdot)-b(x) \in H_{\alpha}(0,T),\quad \text{ for almost all } x\in \Omega.
\end{array}\right.
\end{equation}
It is worth mentioning that the terms $\partial_t^{\alpha} (u-a)$ and $\partial_t^{\alpha} (v-b)$ in the first two lines of \eqref{Eq:nonlin_system_formu} 
are well-defined for almost all $x \in \Omega$ and $0<t<T$, 
due to the inclusion 
formulated in the last line of \eqref{Eq:nonlin_system_formu}. Especially for $\frac{1}{2} < \alpha < 1,$ noting that $w\in H_{\alpha}(0,T)$ implies $w(0)=0$ by the trace theorem, we can understand that the left-hand side means that $u(x,0)=a(x)$ and $v(x,0)=b(x)$ in the trace sense with respect to $t$. While for 
$\alpha<\frac{1}{2}$, we do not need any initial conditions.

Now we are well prepared to investigate the initial-boundary value problem \eqref{Eq:nonlin_system_formu}. We first provide the definition of weak solutions for equations \eqref{Eq:nonlin_system_formu}.
Henceforth we set 
$$
Q:= \OOO \times (0,T)
$$
and
$$
(\partial_t^{\alpha})^*\psi(x,s):= \frac{-1}{\Gamma(1-\alpha)}
\int^T_s (t-s)^{-\alpha} \frac{\partial \psi}{\partial t}(x,t) 
 \mathrm{d}t
$$
for $\psi \in C^1([0,T];L^1(\OOO))$.
Then: 
\begin{definition}\label{Def:weak}
We call $(u,v) \in L^1(0,T;L^1(\Omega))^2$ a weak solution to 
\eqref{Eq:nonlin_system_formu} if  
\begin{equation}\label{Eq:weak_solution}
\left\{\begin{array}{l}
\int_Q (u-a)(\partial_t^{\alpha})^*\psi \,
\mathrm{d}x \mathrm{d}t 
= \int_Q  u \Delta \psi \, \mathrm{d}x \mathrm{d}t
  + \int_Q  f(u,v) \psi \, \mathrm{d}x \mathrm{d}t,          \cr\\ 
\int_{Q} (v-b)(\partial_t^{\alpha})^*\psi \, \mathrm{d}x  \mathrm{d}t
= \int_Q  v \Delta \psi \, \mathrm{d}x\mathrm{d}t 
+ \int_Q  g(u,v) \psi \, \mathrm{d}x \mathrm{d}t,
\end{array}\right.
\end{equation}
for all $\psi(x,t)\in C^{\infty}(\ooo{Q})$ satisfying
$\partial_{\nu} \psi|_{\partial\Omega\times(0,T)}=0$ and 
$\psi(\cdot, T)=0$ in $\Omega$.
\end{definition}
If a weak solution $(u, v) \in L^1(0,T;L^1(\Omega))^2$ is sufficiently smooth,
then we can verify by the definition that 
$(u,v)$ satisfies \eqref{Eq:nonlin_system} pointwise in the classical sense.

Based on the definition, we can give a lemma immediately.
\begin{lemma}
If \eqref{Eq:nonlin_system_formu} has a solution $(u,v) 
\in L^2(0,T;H^2(\OOO))^2$ such that $u-a, v-b 
\in H_{\alpha}(0,T;L^2(\OOO))$, then $(u,v)$ is a weak solution.
\end{lemma}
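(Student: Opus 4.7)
The plan is to derive the weak identities \eqref{Eq:weak_solution} from the strong formulation \eqref{Eq:nonlin_system_formu} by pairing each equation with an admissible test function $\psi \in C^\infty(\overline{Q})$ satisfying $\partial_{\nu}\psi|_{\partial\Omega\times(0,T)} = 0$ and $\psi(\cdot,T) = 0$, integrating over $Q$, and then moving the spatial Laplacian onto $\psi$ by Green's formula and the time-fractional derivative onto its formal adjoint $(\partial_t^\alpha)^*$ by a fractional integration by parts. I would carry out the argument for the $u$-equation in detail; the $v$-equation is identical.

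For the spatial term, since $u \in L^2(0,T;H^2(\Omega))$ satisfies $\partial_{\nu} u = 0$ on $\partial\Omega\times(0,T)$, two applications of Green's formula in $x$ give $\int_Q (\Delta u)\psi\,\mathrm{d}x\mathrm{d}t = \int_Q u(\Delta \psi)\,\mathrm{d}x\mathrm{d}t$, both resulting boundary integrals vanishing — one by $\partial_{\nu} u = 0$, the other by $\partial_{\nu}\psi = 0$. The source term $\int_Q f(u,v)\psi\,\mathrm{d}x\mathrm{d}t$ needs no manipulation. For the time-fractional term the key is the duality identity $\int_0^T (\partial_t^\alpha w)\psi\,\mathrm{d}t = \int_0^T w\,(\partial_t^\alpha)^*\psi\,\mathrm{d}t$ for $w \in H_\alpha(0,T)$ and $\psi \in C^1[0,T]$ with $\psi(T)=0$. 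For smooth $w$ with $w(0)=0$ this is a direct calculation: substitute the Caputo representation of $d_t^\alpha w$, interchange the order of integration by Fubini, and integrate by parts in $s$; the boundary contributions at $s=0$ and $s=T$ vanish by $w(0)=0$ and $\psi(T)=0$ respectively, and what remains is exactly the expression defining $(\partial_t^\alpha)^*\psi$. Applying this with $w(t) = u(x,t) - a(x) \in H_\alpha(0,T)$ for almost every $x \in \Omega$ and then exchanging the $x$- and $t$-integrals by Fubini — legitimate because $\partial_t^\alpha(u-a) \in L^2(Q)$ via the isomorphism $J^\alpha: L^2(0,T) \to H_\alpha(0,T)$ from \cite{GLY} — produces the left-hand side of \eqref{Eq:weak_solution}.

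The main obstacle is extending the duality identity from the classical regime (smooth $w$ with $w(0)=0$) to the full space $H_\alpha(0,T)$, which is where our solution actually lives. The cleanest route is to exploit the isomorphism of \cite{GLY}: writing $u - a = J^\alpha \phi$ for some $\phi \in L^2(Q)$, one identifies $(\partial_t^\alpha)^*$ with the operator whose composition with $J^\alpha$ on the adjoint side yields the backward Riemann–Liouville integral, so that the duality reduces to the tautology for the adjoint of $J^\alpha$. Alternatively, one uses density of smooth functions with the appropriate vanishing at $t=0$ — built into the definition of $H_\alpha$ for $\alpha > \tfrac{1}{2}$ and unnecessary for $\alpha < \tfrac{1}{2}$ — together with boundedness of $\partial_t^\alpha$ on $H_\alpha(0,T)$ and of $(\partial_t^\alpha)^*$ on smooth test functions, to pass to the limit. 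Once this identity is in hand, summing the transformed spatial and temporal terms reproduces \eqref{Eq:weak_solution} verbatim for each component.
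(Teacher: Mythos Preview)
Your proposal is correct and follows the same route as the paper: multiply the strong equations by an admissible test function, integrate over $Q$, and move derivatives onto $\psi$ via Green's formula in space and a fractional integration by parts in time. The paper's own proof compresses all of this into the single phrase ``integrate by parts,'' whereas you have explicitly worked out the temporal duality identity $\int_0^T (\partial_t^\alpha w)\psi\,\mathrm{d}t = \int_0^T w\,(\partial_t^\alpha)^*\psi\,\mathrm{d}t$ and addressed its extension from smooth $w$ to $H_\alpha(0,T)$ --- details the paper leaves implicit.
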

\begin{proof}
Since $(u,v)$ is a solution of \eqref{Eq:nonlin_system_formu}, we have
\begin{equation}\label{Eq:strong}
\left\{\begin{array}{l}
\partial_t^{\alpha} (u-a) = \Delta u +f(u,v), \\ 
\partial_t^{\alpha} (v-b) = \Delta v +g(u,v).
\end{array}\right.
\end{equation}
Multiply both sides of \eqref{Eq:strong} by $\psi(x,t)\in C^{\infty}(\ooo{Q})$, taking into account the boundary condition of $\psi(x,t)$ in Definition \ref{Def:weak}, and integrate by parts. This leads us to the same formula as \eqref{Eq:weak_solution}, thus completing the proof.
\end{proof}

\begin{assumption}\label{Ass:nonlin_term}
Let $f,g: \mathbb{R}^2 \longrightarrow \R$ be local Lipschitz continuous.
More precisely, for arbitrarily given $M>0$, there exists a constant 
$C=C_M>0$ such that
\begin{equation}\label{Eq:lips_condi}
|f(\xi,\eta)-f(\widehat{\xi},\widehat{\eta})|
+ |g(\xi,\eta)-g(\widehat{\xi},\widehat{\eta})|
\leq C (|\xi-\widehat{\xi}|+|\eta-\widehat{\eta}|).
\end{equation}
for all $\xi, \eta, \widehat{\xi},\widehat{\eta}\in [-M,M]$. 
Moreover we assume  
$$
f(0,\eta) = g(\xi,0) = 0, \quad \mbox{ for all } \xi\ge 0 \mbox{ and }
\eta \ge 0.
$$
Finally we assume that we can find a constant $\lambda>0$
such that 
$$
f(\xi,\eta)+\lambda g(\xi,\eta) \leq 0 \quad \mbox{ for all } \xi, \eta \ge 0.
$$
\end{assumption}

Here, compared to the conditions required in \cite{Pi}, we impose stronger 
requirements on $f$ and $g$, which are crucial 
for the convergence of the truncated non-linear terms in Sections 
\ref{Sec:thm_proof}. Unlike \cite{Pi}, we no longer restrict $\lambda$ to be less 
than or equal to 1. 

Now we state the first main result in this article, which validates the 
well-posedness for given $T>0$ of the initial-boundary value problem 
\eqref{Eq:nonlin_system_formu}.

\begin{theorem}\label{Th:well-pose}
In \eqref{Eq:nonlin_system_formu}, we assume $a,b\geq 0$, $a,b\in H^1(\Omega) \cap L^{\infty}(\OOO)$, with $f,g$ satisfy Assumption \ref{Ass:nonlin_term}. Then, for arbitrarily given $T>0$, 
there exists at least one weak solution 
$(u,v) \in L^1(\Omega\times (0,T))^2$ such that $u, v \ge 0$ in $Q$.
\end{theorem}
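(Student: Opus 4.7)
The plan is to build a weak solution by a truncation-plus-fixed-point scheme, with non-negativity enforced via the comparison principle for the linear system and with global $L^1$ control provided by the structural condition $f+\lambda g\le 0$. Precisely, for each integer $N\ge 1$ I would first introduce truncated nonlinearities $f_N,g_N:\R^2\to\R$ that coincide with $f,g$ on $[-N,N]^2$ and are globally Lipschitz and globally bounded; Assumption \ref{Ass:nonlin_term} carries over on the first quadrant in the form $f_N(0,\eta)=g_N(\xi,0)=0$ for $\xi,\eta\ge 0$ and $f_N+\lambda g_N\le 0$ there.

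Next, for fixed $N$ I would construct approximate solutions by applying a Schauder-type fixed-point argument to the map $\Phi_N:(u,v)\mapsto (U,V)$, where $(U,V)$ solves the decoupled linear fractional Neumann problems
\begin{equation*}
\partial_t^{\alpha}(U-a)=\Delta U+f_N(u,v),\qquad
\partial_t^{\alpha}(V-b)=\Delta V+g_N(u,v),
\end{equation*}
with $\partial_\nu U=\partial_\nu V=0$. The compactness lemma announced in Section \ref{Sec:lemma} supplies the compactness of $\Phi_N$ in a suitable $L^1$-based space, and the bounded invariant set is produced by the a priori bounds furnished by the linear theory (the source terms are uniformly bounded in $N$-dependent fashion). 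To enforce non-negativity I would invoke the comparison principle from Section \ref{Sec:lemma}: using $f(0,\eta)=0$ (plus Lipschitz continuity) write $f(u,v)=u\cdot\tilde f(u,v)$ with $\tilde f$ bounded, so the $u$-equation becomes a linear time-fractional equation with a bounded zero-order coefficient, whose comparison with the zero sub-solution forces $u\ge 0$ when $a\ge 0$; symmetrically for $v$. Iterating $\Phi_N$ within the non-negative cone (or restricting the fixed-point search to it) then yields a non-negative fixed point $(u_N,v_N)\ge 0$.

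The key global bound is then obtained by testing the sum of the two equations. Multiplying the first equation by $1$ and the second by $\lambda$, adding, integrating over $\Omega$, and using the homogeneous Neumann condition plus the sign condition $f_N+\lambda g_N\le 0$ on non-negative arguments, one obtains
\begin{equation*}
\partial_t^{\alpha}\!\int_\Omega\bigl(u_N+\lambda v_N-(a+\lambda b)\bigr)\,dx\le 0,
\end{equation*}
which, through the standard positivity/monotonicity properties of $\partial_t^{\alpha}$ (an $L^1$-in-time fractional Gronwall), produces a uniform-in-$N$ bound on $\|u_N+\lambda v_N\|_{L^\infty(0,T;L^1(\Omega))}$ in terms of $\|a+\lambda b\|_{L^1(\Omega)}$. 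Since $u_N,v_N\ge 0$, this immediately yields uniform $L^1(Q)$ bounds on $u_N$ and $v_N$ separately, and hence on $f_N(u_N,v_N)$ and $g_N(u_N,v_N)$ via the truncation and local Lipschitz structure (after the usual rewriting $f(u,v)=u\tilde f$, $g(u,v)=v\tilde g$).

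Finally I would pass to the limit $N\to\infty$. The uniform $L^1$ bounds plus the compactness property of the linear solution map should give relative compactness of $(u_N,v_N)$ in $L^1(Q)^2$, so that along a subsequence $u_N\to u$, $v_N\to v$ a.e.\ and in $L^1(Q)$; the non-negativity and the $L^1$-bound pass to the limit trivially. Testing against $\psi\in C^\infty(\overline Q)$ with $\partial_\nu\psi=0$ and $\psi(\cdot,T)=0$ gives the identity of Definition \ref{Def:weak} for $(u_N,v_N)$, and the only delicate passage is the nonlinear term. I expect the \emph{main obstacle} to be exactly this step: to identify $\lim_N f_N(u_N,v_N)=f(u,v)$ (and likewise for $g$) in $L^1(Q)$, one needs not only a.e.\ convergence but also equi-integrability of $\{f_N(u_N,v_N)\}$ to apply Vitali's theorem. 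The sign condition $f_N+\lambda g_N\le 0$ gives equi-integrability of the sum, but to split this into equi-integrability of each term separately will require additional work, presumably a truncation-of-test-function argument in the spirit of Pierre \cite{Pi,Pi2} exploiting the strengthened hypothesis on $\lambda$ and on $f(0,\cdot),g(\cdot,0)$ that the authors emphasize just before the theorem statement.
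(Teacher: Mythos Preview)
Your outline is close to the paper's, but you have missed the decisive step that makes the limit passage work, and as a result you flag as the ``main obstacle'' something the paper never faces. After establishing non-negativity and the $L^1$ bound on $u_N+\lambda v_N$ exactly as you describe, the paper does \emph{not} attempt to prove equi-integrability of $f_N(u_N,v_N)$ separately. Instead it observes that $W:=u_N+\lambda v_N$ satisfies
\[
\partial_t^{\alpha}\bigl(W-(a+\lambda b)\bigr)=\Delta W + \bigl(f_N(u_N,v_N)+\lambda g_N(u_N,v_N)\bigr),\qquad \partial_\nu W=0,
\]
with non-positive source. Here the equal diffusion coefficients are essential: only then does the sum solve a single scalar equation. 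Comparing $W$ with the constant super-solution $\|a+\lambda b\|_{L^\infty(\Omega)}$ via the linear comparison principle yields a \emph{uniform $L^\infty(Q)$ bound} $0\le u_N,v_N\le C_1$ independent of $N$. Consequently, for all $N\ge C_1$ the truncation is inactive, $f_N(u_N,v_N)=f(u_N,v_N)$, and $|f_N(u_N,v_N)|\le \sup_{[0,C_1]^2}|f|$ pointwise. After extracting a.e.\ convergence of $(u_N,v_N)$ from the compactness lemma, Lebesgue dominated convergence gives $f_N(u_N,v_N)\to f(u,v)$ in $L^1(Q)$ directly---no Vitali, no equi-integrability splitting.

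So your plan has a real gap at the limit step: the Pierre-style equi-integrability argument you propose is both unnecessary here and, for fractional-in-time problems, not known to work (this is exactly why the authors restrict to equal diffusions, as they note in the introduction). Also, your claimed $L^1$ bound on $f_N(u_N,v_N)$ ``via the rewriting $f(u,v)=u\tilde f$'' is circular without the $L^\infty$ bound, since $\tilde f$ is only bounded on bounded sets; the paper instead reads the $L^1$ bound on $f_N$ off the integrated equation $\partial_t^{\alpha}\int_\Omega(u_N-a)\,dx=\int_\Omega f_N\,dx$ together with the already-established $L^1$ control of $u_N$.
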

We do not know the uniqueness of weak solution under our assumption, which is 
the same as for the case $\alpha=1$.
 
Before starting with the proof of Theorem \ref{Th:well-pose}, we introduce some notations and derive several results necessary for the proof.

For an arbitrary constant $p_0>0$, define an elliptic operator $A$ 
as follows:
\begin{equation}
\left\{\begin{array}{l}\left(-A v\right)(x):=\Delta v(x)-p_{0} v(x), \quad x \in \Omega, \\ \mathcal{D}(A)=\left\{v \in H^{2}(\Omega); \partial_{\nu} v=0 \text { on } \partial \Omega\right\}.
\end{array}\right.
\end{equation}
Henceforth, by $\|\cdot\|$ and $(\cdot,\cdot)$ we denote the standard norm and the scalar product in $L^2(\Omega)$, respectively. It is well-konwn that the operator $A$ is self-adjoint in $L^2(\Omega)$. Moreover, for a sufficiently large constant $p_0>0$, we can verify that $A$ is positive definite\cite{LY1}. Therefore, by choosing a constant $p_0$ large enough, the spectrum of the operator $A$ is comprised of discrete positive eigenvalues, herein represented as $0 < \lambda_1 \leq \lambda_2 \leq \cdots$, each uniquely designated by its multiplicity. Additionally, $\lambda_n \to \infty$ as $n\to \infty$. Set $\varphi_n$ be the eigenfunction corresponding to the eigenvalue $\lambda_n$ such that $A \varphi_n=\lambda_n \varphi_n$, and $(\varphi_i, \varphi_j)=\delta_{ij}$. Then the sequence $\{\varphi_n\}_{n\in \mathbb{N}}$ is orthonormal basis in $L^2(\Omega)$.
For any $\gamma >0$, we can define the fractional power $A^{\gamma}$ of the operator $A$ by the following relation (see, e.g.,\cite{Pa}): 
$$
A^{\gamma}v=\sum_{n=1}^{\infty} \lambda^{\gamma}_{n} (v,\varphi_n)\varphi_n,
$$
where
$$
v\in \mathcal{D}(A^{\gamma}):=\left\{ v\in L^2(\Omega): \sum_{n=1}^{\infty} \lambda_n^{2\gamma} (v,\varphi_n)^2 < \infty \right\}
$$
and
$$
\|A^{\gamma}v\|=\left( \sum_{n=1}^{\infty} \lambda_n^{2\gamma} (v,\varphi_n)^2  \right)^{\frac{1}{2}}.
$$
We have $\mathcal{D}(A^{\gamma}) \subset H^{2\gamma}(\Omega)$ for $\gamma>0$. Since $\mathcal{D}(A^{\gamma}) \subset L^2(\Omega)$, identifying the dual $(L^2(\Omega))^{\prime}$ with itself, we have $\mathcal{D}(A^{\gamma}) \subset L^2(\Omega) \subset \mathcal{D}(A^{\gamma})^{\prime}$. Henceforth we set $\mathcal{D}(A^{-\gamma}) = \mathcal{D}(A^{\gamma})^{\prime}$, which consists of bounded linear functionals on $\mathcal{D}(A^{\gamma})$. For $w\in \mathcal{D}(A^{-\gamma})$ and $v\in \mathcal{D}(A^{\gamma})$, by ${}_{-\gamma}\langle w,v \rangle_{\gamma}$, we denote the value which is obtained by operating $w$ to $v$. We note that $\mathcal{D}(A^{-\gamma})$ is a Hilbert space with norm:
$$
\|w\|_{\mathcal{D}(A^{-\gamma})} = \left\{ \sum_{n=1}^{\infty} \lambda^{-2\gamma}_n \left|{}_{-\gamma}\langle w,v \rangle_{\gamma}\right|^2 \right\}^{\frac{1}{2}}.
$$
We further note that
$$
{}_{-\gamma}\langle w,v \rangle_{\gamma} = (w,v) \text{ if } w\in L^2(\Omega) \text{ and } v\in \mathcal{D}(A^{\gamma}).
$$

Moreover we define two other operators $S(t)$ and $K(t)$ as:
$$
S(t)a=\sum_{n=1}^{\infty} E_{\alpha,1}(-\lambda_n t^{\alpha})(a,\varphi_n)\varphi_n,\  a\in L^2(\Omega), t>0
$$
and
$$
K(t)a = \sum_{n=1}^{\infty}t^{\alpha-1}E_{\alpha,\alpha}(-\lambda_n t^{\alpha})(a,\varphi_n)\varphi_n,\  a\in L^2(\Omega), t>0,
$$
where $E_{\alpha,\beta}(z)$ denotes the Mittag-Leffler function defined by a convergent series as follows:
$$
E_{\alpha,\beta}(z)=\sum_{k=0}^{\infty} \frac{z^k}{\Gamma(\alpha k+\beta)},\ \alpha>0, \beta\in \mathbb{C}, z\in \mathbb{C}.
$$
It follows directly from the definitions given above that $A^{\gamma}K(t)a=K(t)A^{\gamma}a$ and $A^{\gamma}S(t)a=S(t)A^{\gamma}a$ for $a\in \mathcal{D}(A^{\gamma})$. Moreover, the following norm estimates were proved in\cite{SY}:
\begin{equation}\label{Equ:ken_norm}
\begin{aligned}
\|A^{\gamma}S(t)a\|&\leq Ct^{-\alpha\gamma}\|a\|,
\\
\|A^{\gamma}K(t)a\| &\leq Ct^{\alpha(1-\gamma)-1} \|a\|,\ a\in L^2(\Omega), t>0, 0\leq \gamma \leq 1.
\end{aligned}
\end{equation}

\section{Key Lemma} \label{Sec:lemma}

\setcounter{section}{3} \setcounter{equation}{0} %% to have proper 2-digits numbers of eqs
%% Note that this style produces 1-digit numbering of definitons, statements, exmaples, etc.

For the proof of Theorem \ref{Th:well-pose}, we show several lemmas.

\begin{lemma} (Non-negativity) \label{Lm:non_nega}
We assume that $c\in L^{\infty}(Q)$, $a \in H^1(\OOO)$, $a \ge 0$ in 
$\OOO$ and $F \in L^2(Q)$, $F\ge 0$ in $Q$.
Let $u \in L^2(0,T;H^2(\OOO))$ satisfy \eqref{Eq:lin_system} and 
$u-a \in H_{\alpha}(0,T;L^2(\OOO))$:
\begin{equation}\label{Eq:lin_system}
\left\{\begin{array}{l}
\partial_t^{\alpha} (u-a) = \Delta u + c(x,t)u + F(x,t),\quad x\in \Omega, 
0< t < T, \\
\partial_{\nu} u = 0,\quad \text{ on } \partial \Omega, 0<t<T,    \\
u(x,\cdot)-a(x)\in  H_{\alpha}(0,T),\quad  \text{ for almost all } 
x\in \Omega.
\end{array}\right.
\end{equation}
Then $u \ge 0$ in $Q$.
\end{lemma}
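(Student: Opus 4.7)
\medskip\noindent\emph{Proof proposal.}
The plan is to show that $u_-(x,t) := \max\{-u(x,t),0\}$ vanishes identically, by deriving a fractional Gronwall inequality for $w(t) := \|u_-(\cdot,t)\|^2$. Since $a\ge 0$ and $u-a\in H_{\alpha}(0,T;L^2(\OOO))$, the initial trace gives $u(\cdot,0)=a\ge 0$, so $w(0)=0$; and since $u\in L^2(0,T;H^2(\OOO))$, one has $u_-\in L^2(0,T;H^1(\OOO))$ with $\nabla u_-=-\chi_{\{u<0\}}\nabla u$ in the weak sense.

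The workhorse is a Kato--Alikhanov-type chain-rule inequality for the Caputo derivative applied to the convex function $\phi(s)=s_-^2$ (so $\phi'(s)=-2s_-$):
\begin{equation*}
\partial_t^{\alpha}\bigl(u_-^2\bigr)(t) \;\leq\; -2\, u_-(t)\, \partial_t^{\alpha} u(t).
\end{equation*}
Via the Marchaud representation of $\partial_t^{\alpha}$, this follows from the supporting-hyperplane inequality $\phi(y)-\phi(x)-\phi'(x)(y-x)\ge 0$ for convex $\phi$, inserted pointwise into the singular kernel; notably it does \emph{not} require $u(0)=0$. Because $a(x)$ is independent of $t$ its classical Caputo derivative vanishes, so $\partial_t^{\alpha} u$ may legitimately be replaced by $\partial_t^{\alpha}(u-a)$, and substituting the equation yields
\begin{equation*}
\partial_t^{\alpha} u_-^2 \;\leq\; -2\, u_-\bigl(\Delta u + c\, u + F\bigr).
\end{equation*}

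I would next integrate over $\OOO$ and handle the three right-hand terms separately. Integration by parts, using $\partial_{\nu} u=0$ and the identity $\nabla u_-\cdot\nabla u=-|\nabla u_-|^2$, gives $-2\int_{\OOO} u_-\Delta u\,\mathrm{d}x=-2\|\nabla u_-\|^2\le 0$. Since $u\cdot u_-=-u_-^2$ and $c\in L^{\infty}(Q)$, one obtains $-2\int_{\OOO} c u u_-\,\mathrm{d}x = 2\int_{\OOO} c u_-^2\,\mathrm{d}x\le 2\|c\|_{L^{\infty}(Q)}w(t)$. Finally $-2\int_{\OOO} u_-F\,\mathrm{d}x\le 0$ by non-negativity of $u_-$ and $F$. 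Fubini pulls $\partial_t^{\alpha}$ out of the spatial integral, producing the scalar inequality
\begin{equation*}
\partial_t^{\alpha} w(t)\;\le\; 2\|c\|_{L^{\infty}(Q)}\, w(t), \qquad w(0)=0, \qquad w\ge 0.
\end{equation*}
Applying $J^{\alpha}$, iterating the resulting integral inequality, and noting that $\sum_n (C T^{\alpha})^n/\Gamma(n\alpha+1)<\infty$ (i.e.\ the fractional Gronwall inequality whose bound involves $E_{\alpha,1}$) forces $w\equiv 0$, whence $u\ge 0$ in $Q$.

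The principal obstacle is the rigorous justification of the chain-rule inequality at the assumed regularity: $u$ itself need not lie in $H_{\alpha}(0,T;L^2(\OOO))$ (only $u-a$ does), and $\phi(s)=s_-^2$ is merely $C^{1,1}$. I would handle this by a double approximation: first regularize $\phi$ to a strictly convex $C^2$ function $\phi_{\ep}$ with $\phi_{\ep}'(s)\to -2s_-$, and then smooth $u-a$ in time by the Riemann--Liouville mollifier $J^{\ep}$ (which preserves $H_{\alpha}$) to produce approximants on which the pointwise chain rule is immediate; then pass to the limit using strong convergence in $L^2(Q)$ and the closedness of $\partial_t^{\alpha}$ on $H_{\alpha}(0,T;L^2(\OOO))$.
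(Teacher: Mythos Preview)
Your approach is sound and is a genuinely different route from the paper's. The paper does \emph{not} test with the negative part; instead it approximates the rough coefficient $c\in L^\infty(Q)$ by $c_n\in C_0^\infty(Q)$ in $L^\kappa(Q)$, invokes the comparison principle already established in \cite{LY1} for smooth coefficients to get $u_n\ge 0$, and then passes to the limit in the solution map. In other words, the paper outsources the actual positivity argument to the cited result and only supplies the coefficient-approximation layer.

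Your energy argument via the Alikhanov-type inequality $\partial_t^{\alpha}(u_-^2)\le -2u_-\,\partial_t^{\alpha}(u-a)$ and fractional Gronwall is the standard direct proof, and it has the pleasant feature that $c\in L^\infty(Q)$ enters only through $\|c\|_{L^\infty(Q)}$ in the zero-order estimate, so no approximation of $c$ is needed. The trade-off is exactly the one you flag: you must justify the convex chain rule at the regularity $u-a\in H_\alpha(0,T;L^2(\Omega))$, $u\in L^2(0,T;H^2(\Omega))$, and in particular you need $\|u_-(\cdot,t)\|^2\in H_\alpha(0,T)$ (or at least an $L^1$-in-time version sufficient for the $J^\alpha$-iteration). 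Your double mollification plan is the right idea; the only caveat is that squaring does not obviously preserve $H_\alpha$ without an $L^\infty$-in-time bound on $u_-$, so when you carry it out you will want to work either with the inequality integrated in $x$ from the start (so that the object requiring fractional regularity is the scalar $w(t)$, not $u_-^2$), or use the coercivity identity of Kubica--Ryszewska--Yamamoto / Vergara--Zacher which is stated directly at the $L^2(\Omega)$ level. Once that is in place, the Gronwall step is routine. Note also that for $\alpha\le \tfrac12$ there is no trace, but this is harmless: $a\ge 0$ gives $\phi(a)=a_-^2=0$, so $u_-^2-\phi(a)=u_-^2$ and the $H_\alpha$ membership is all that is used.
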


The proof is simialr to Luchko and Yamamoto \cite{LY,LY1} where 
the coefficient $c$ is assumed to be more smooth than $L^{\infty}(Q)$.
For the case $c\in L^{\infty}(Q)$, approximating $c$ by 
functions in $C^{\infty}_0(Q)$ in the space
$L^{\kappa}(Q)$ with arbitrarily fixed $\kappa>1$.
For completeness, we provide the proof in Appendix.

Here and henceforth we set $A=-\Delta + p_0$. 
For $w_0 \in H^1(\OOO)$ and $F \in L^2(Q)$, we consider 
\begin{equation}\label{Eq:frac_single}
\left\{\begin{array}{l}\partial_{t}^{\alpha}\left(w-w_{0}\right)+A w
=F(x, t),\quad x \in \Omega, 0<t<T, \\ \partial_{\nu} w=0,\quad x \in \partial \Omega, 0<t<T, \\ w(x, \cdot)-w_{0}(x) \in H_{\alpha}(0, T),\quad \text { for almost all } 
x \in \Omega.
\end{array}\right.
\end{equation}
Then we know (e.g., \cite{KRY}) that there exists a unique solution
$w \in L^2(0,T;H^2(\OOO))$ to \eqref{Eq:frac_single} satisfying 
$w - w_0 \in H_{\alpha}(0,T;L^2(\OOO))$.
By $w(w_0,F)$ we denote the solution.

We can prove a compactness property:
\begin{lemma}\label{Lm:comp}
Let $M_1>0$ be arbitrarily chosen constant.
Then a set
$$
\left\{ w(w_0^n, F_n)| (w_0^n, F_n) \in H^1(\OOO)\times
L^2(Q),\ \Vert w_0^n\Vert_{L^1(\OOO)}
+ \Vert F_n\Vert_{L^1(Q)} \le M_1\right\}
$$
contains a subsequence which converges in $L^1(Q)$.
\end{lemma}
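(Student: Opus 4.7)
The plan is to use the mild representation of the solution in terms of the Mittag--Leffler operators $S(t)$ and $K(t)$ introduced in Section~\ref{Sec:well}, extend these operators to $L^1$-data, derive suitable space-time regularity of each $w(w_0^n,F_n)$, and conclude by the Kolmogorov--Riesz--Fr\'echet compactness criterion in $L^1(Q)$. Concretely, I would first write
\[
w(w_0^n, F_n)(t) = S(t)\, w_0^n + \int_0^t K(t-s) F_n(s)\, ds
\]
and extend it to $L^1$-data by density. Since $\mathcal{D}(A^\eta)\subset H^{2\eta}(\Omega)\subset L^\infty(\Omega)$ whenever $\eta>d/4$ (admissible since $d\in\{1,2,3\}$), duality yields $L^1(\Omega)\subset \mathcal{D}(A^{-\eta})$; combining this with \eqref{Equ:ken_norm} gives, for $a\in L^1(\Omega)$ and any $\gamma\ge 0$,
\[
\|A^\gamma S(t) a\| \le C\, t^{-\alpha(\gamma+\eta)}\,\|a\|_{L^1(\Omega)},\qquad \|A^\gamma K(t) a\| \le C\, t^{\alpha(1-\gamma-\eta)-1}\,\|a\|_{L^1(\Omega)}.
\]

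Next I would pick $\gamma>0$ small and $r>1$ close to $1$ so that both singular exponents above become $L^r$-integrable on $(0,T)$; Young's convolution inequality then gives the uniform bound
\[
\|w(w_0^n,F_n)\|_{L^r(0,T;\mathcal{D}(A^\gamma))} \le C M_1.
\]
The Rellich--Kondrachov theorem provides a compact embedding of $\mathcal{D}(A^\gamma)\subset H^{2\gamma}(\Omega)$ into $L^1(\Omega)$, which gives the required spatial compactness uniformly in $n$. A uniform $L^1(Q)$ bound follows from the same kernel estimates with $\gamma=0$.

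For temporal equicontinuity I would estimate the increment $w(w_0^n,F_n)(t+h)-w(w_0^n,F_n)(t)$ in $L^1(\Omega)$ directly from the mild formula, using the strong continuity in $t>0$ of $S(t)$ and $K(t)$ into $L^1(\Omega)$ together with the integrability of their time singularities at $0$; alternatively, the equation yields a uniform bound on $\partial_t^\alpha(w-w_0)$ in a suitable negative-order space, which via the isomorphism $J^\alpha:L^2(0,T)\to H_\alpha(0,T)$ transfers to H\"older-in-time control. Combining the uniform $L^1(Q)$ bound, the spatial compactness from the previous step, and this temporal equicontinuity, the Kolmogorov--Riesz--Fr\'echet theorem extracts an $L^1(Q)$-convergent subsequence.

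The main obstacle is the extension step. The smoothing estimates \eqref{Equ:ken_norm} are stated only in the $L^2$ framework, so one must work carefully with the negative-power spaces $\mathcal{D}(A^{-\eta})$ and simultaneously select $\eta>d/4$ (for $L^1(\Omega)\subset\mathcal{D}(A^{-\eta})$), $\gamma>0$ (for compactness of $\mathcal{D}(A^\gamma)$ into $L^1(\Omega)$), and $r>1$ (so that $t\mapsto t^{-\alpha(\gamma+\eta)}$ and $t\mapsto t^{\alpha(1-\gamma-\eta)-1}$ are $L^r$-integrable near $0$). The dimensional restriction $d\le 3$ adopted in the paper is exactly what leaves room for this three-parameter balancing to succeed.
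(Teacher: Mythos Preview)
Your proposal follows essentially the same route as the paper: mild representation via $S(t)$ and $K(t)$, the duality embedding $L^1(\Omega)\subset\mathcal{D}(A^{-\eta})$ for $\eta>d/4$ (the paper proves this as Lemma~\ref{Le:frac_ope_est}), uniform spatial regularity from \eqref{Equ:ken_norm}, and a compactness criterion combining this with temporal equicontinuity. The paper's implementation uses Simon's theorem (Lemma~\ref{Lm:Simon_compact}) with the triple $X=\mathcal{D}(A^{1-\delta})\subset B=L^1(\Omega)\subset Y=\mathcal{D}(A^{-\delta})$, which is convenient because the time translation estimate need only be checked in the \emph{weak} space $Y$; your Kolmogorov--Riesz--Fr\'echet formulation would force you to establish equicontinuity in $L^1(\Omega)$ itself, costing an extra interpolation step.

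The one place where your sketch is genuinely incomplete is the temporal equicontinuity. Your first option (``strong continuity of $S(t),K(t)$ together with integrability of the singularity'') gives continuity for each fixed datum but not uniformity over the family $\{(w_0^n,F_n)\}$. Your second option invokes the isomorphism $J^{\alpha}:L^2(0,T)\to H_\alpha(0,T)$, but from the equation one only obtains $\partial_t^{\alpha}(w_n-w_0^n)\in L^1\bigl(0,T;\mathcal{D}(A^{-\delta})\bigr)$ uniformly, not $L^2$-in-time, so the $H_\alpha$ machinery does not apply. The paper closes this gap by writing $A^{-\delta}(w_n-w_0^n)=J^{\alpha}\widetilde w_n$ with $\widetilde w_n$ bounded in $L^1(0,T;L^2(\Omega))$, and then estimating $\|J^{\alpha}\widetilde w_n(\cdot+h)-J^{\alpha}\widetilde w_n\|_{L^1(0,T-h;L^2(\Omega))}$ by hand, splitting the convolution integral; this yields an explicit $O(h^{\alpha})$ bound uniform in $n$, which is exactly what Simon's criterion requires.
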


\begin{proof}
We introduce an operator $\tau_h(f)$ for $h>0$ and 
$f \in L^1(0,T;X)$ with Banach space $X$ by 
$\tau_h(f)(t)=f(t+h)$.

We show two lemmas for the proof of Lemma \ref{Lm:comp}.
\begin{lemma} (Simon\cite{Si}) \label{Lm:Simon_compact}
Let $X\subset B \subset Y$ be Banach spaces 
and the embedding $X\to Y$ be compact.  We assume  
$$
\|v\|_{B}\leq C \|v\|^{1-\theta}_{X} \|v\|^{\theta}_{Y}, \quad \forall v \in X\cap Y, \quad \text{ where } 0\leq \theta \leq 1,
$$
$$
\mbox{ a set } U \mbox{ is bounded in } L^{1}(0,T;X)
$$
and
$$
\|\tau_h u-u \|_{L^{1}(0,T-h;Y)}\to 0 \mbox{ as } h\to 0 \mbox{ uniformly for }
u \in U.
$$
Then $U$ is relatively compact in $L^{1}(0,T;B)$.
\end{lemma}

\begin{lemma}\label{Le:frac_ope_est}
Let $\Omega \in \mathbb{R}^n$, if $\gamma > \frac{d}{4}$, then we have
$$
\|A^{-\gamma} w\|_{L^2(\Omega)} \leq C \|w\|_{L^1(\Omega)}.
$$ 
\end{lemma}
{\it Proof of Lemma \ref{Le:frac_ope_est}}

Since $\mathcal{D}(A^{\gamma})\subset H^{2\gamma}(\Omega)\subset L^{\infty}(\Omega)$, we can get $\|v\|_{L^{\infty}(\Omega)}\leq C\|A^{\gamma}v\|_{L^2(\Omega)}$.
Moreover, we have
$$
\begin{aligned}
\|A^{-\gamma} w \|_{L^2(\Omega)} &= \sup_{\|\varphi\|_{L^2(\Omega)}=1} |(A^{-\gamma}w, \varphi)_{L^2(\Omega)}| \\
&= \sup_{\|\varphi\|_{L^2(\Omega)}=1}  |(w, A^{-\gamma} \varphi)_{L^2(\Omega)}| \\
&\leq C \|w\|_{L^1(\Omega)} \|A^{-\gamma} \varphi\|_{L^{\infty}(\Omega)} \\
&\leq C \|w\|_{L^1(\Omega)} \|A^{\gamma}A^{-\gamma} \varphi\|_{L^{2}(\Omega)} \\
&\leq C \|w\|_{L^1(\Omega)}.
\end{aligned}
$$
Then we finished the proof.
$\blacksquare$

Now we proceed to the proof of Lemma \ref{Lm:comp}.
In Lemma \ref{Lm:Simon_compact}, set $X=\mathcal{D}(A^{1-\delta}), Y=\mathcal{D}(A^{-\delta})$ and $B=L^1(\Omega)$. The intermediate spaces \cite{BL} 
between $X$ and $Y$ are given by
$$[X,Y]_{\theta}=\mathcal{D}(A^{1-\delta-\theta}),\quad \forall \theta\in [0,1].
$$ 
If we choose $\theta=1-\delta$, we obtain $[X,Y]_{1-\delta}=\mathcal{D}(A^0)=L^2(\Omega)$.
Referring to the interpolation result of Sobolev spaces, this implies
$$
\|w\|_{L^2(\Omega)} \leq C\|w\|_{\mathcal{D}(A^{1-\delta})}^{\delta} \|w\|_{\mathcal{D}(A^{-\delta})}^{1-\delta}.
$$
Applying the embedding theory of $L^p(\Omega)$ for $p\in [1,\infty]$, we have $\|w\|_{L^1(\Omega)}\leq C \|w\|_{L^2(\Omega)}$, which means 
$$
\|w\|_{L^1(\Omega)} \leq C\|w\|_{\mathcal{D}(A^{1-\delta})}^{\delta} \|w\|_{\mathcal{D}(A^{-\delta})}^{1-\delta}.
$$
In order to apply Lemma \ref{Lm:Simon_compact}, we still need to prove the following inclusion relation:
\begin{equation}\label{Eq:com_embed}
\mathcal{D}(A^{1-\delta}) \subset L^1(\Omega) \subset \mathcal{D}(A^{-\delta}).
\end{equation}
To establish \eqref{Eq:com_embed}, we start by considering the following relationship:
$$\mathcal{D}(A^{\delta})\subset H^{2\delta}(\Omega)\subset H^{2\delta^{\prime}}(\Omega)\subset L^{\infty}(\Omega),$$
where $\frac{d}{4}<\delta^{\prime}<\delta$. 
The Kondrachov embedding theorem implies 
$H^{2\delta}(\Omega)\subset H^{2\delta^{\prime}}(\Omega)$ is compact. 
Consequently, we obtain that $\mathcal{D}(A^{\delta})\subset L^{\infty}(\Omega)$ is compact. By the definition, $L^1(\Omega)\subset \left( 
L^{\infty}(\Omega) \right)^{\prime} \subset \left( 
\mathcal{D}(A^{\delta}) \right)^{\prime}=\mathcal{D}(A^{-\delta})$. 
Here $X^{\prime}$ denotes the dual space of a Banach space $X$.
Hence, we have proved \eqref{Eq:com_embed}, and the mapping $\mathcal{D}(A^{1-\delta}) \to \mathcal{D}(A^{-\delta})$ is compact.

We set $w_n:= w(w_0^n, F_n)$ for each $n\in \N$.
In order to prove Lemma \ref{Lm:comp}, in terms of Lemma \ref{Lm:Simon_compact},
it is sufficient to show that 
$\|w_n\|_{L^1(0,T;\mathcal{D}(A^{1-\delta}))}$ is bounded and 
$\|\tau_h w_n- w_n\|_{L^1(0,T;\mathcal{D}(A^{-\delta}))}\to 0 $  
as $h\to 0$ uniformly for each $n \in \N$. 

Using the operator $S(t)$ and $K(t)$, the solution can be expressed 
by the formula\cite{LY1,LY2}:
\begin{equation}\label{Equ:kernel_expre}
w_n(t)=S(t)w_0^n + \int_0^t K(t-s)F_n(s) \mathrm{d}s.
\end{equation}
Writing \eqref{Equ:kernel_expre} as
$$
w_n(t) =S(t)w_0^n+ \int_0^t A^{\gamma} K(t-s) A^{-\gamma} F_n(s) \mathrm{d}s.
$$
Then, multiplying both sides by $A^{1-\delta}$, we obtain
\begin{equation}\label{Eq:A_1_delta}
A^{1-\delta}w_n(t) =AS(t)A^{-\delta}w_0^n 
+ \int_0^t A^{1-\delta+\gamma} K(t-s) A^{-\gamma} F_n(s) \mathrm{d}s,
\end{equation}
where $\delta > \gamma> \frac{d}{4}$.
Applying the norm estimates \eqref{Equ:ken_norm}, we have $\|A^{1-\delta+\gamma} K(t-s)\|\leq (t-s)^{\alpha(\delta-\gamma)-1}$ and $\|A S(t)\|\leq t^{-\alpha}$.
By combining this with Lemma \ref{Le:frac_ope_est}, we can derive
$$
\left \|A^{1-\delta} w_n(t)\right\|_{L^2(\Omega)} 
\leq C\int_0^t s^{-\alpha} \|w_0^n\|_{L^1(\Omega)} \mathrm{d}s  
+ C \int_0^t (t-s)^{\alpha(\delta-\gamma)-1} \|F_n(s)\|_{L^1(\Omega)} 
\mathrm{d}s. 
$$
Furthermore, applying Young's inequality, we reach 
$$
\left \|A^{1-\delta} w_n\right\|_{L^1(0,T;L^2(\Omega))} 
\leq C \left(\|w_0^n\|_{L^1(\Omega)} + \|F_n\|_{L^1(0,T;L^1(\Omega))} \right).
$$
Multiply \eqref{Eq:frac_single} by $A^{-\delta}$, and we have
$$
A^{-\delta} \partial_t^{\alpha} (w_n-w_0^n) = -A^{1-\delta} w_n 
+ A^{-\delta} F_n.
$$
Applying \eqref{Eq:A_1_delta} and Lemma \ref{Le:frac_ope_est} again, we can 
obtain 
$$
\begin{aligned}
&\|A^{-\delta}\partial_t^{\alpha} (w_n-w_0^n)\|_{L^1(0,T;L^2(\Omega))} \\
\leq & C\left(\|w_0^n\|_{L^1(\Omega)} + \|F_n\|_{L^1(0,T;L^1(\Omega))} \right)+  \|A^{-\delta}F_n\|_{L^1(0,T;L^2(\Omega))}\\
\leq &C\left(\| w_0^n\|_{L^1(\Omega)} + \|F_n\|
_{L^1(0,T;L^1(\Omega))} \right).
\end{aligned}
$$
Here we have obtained
\begin{equation}
\begin{aligned}
\|A^{1-\delta}w_n\|_{L^1(0,T;L^2(\Omega))}& \leq C \left(\|w_0^n\|
_{L^1(\Omega)} + \|F_n\|_{L^1(0,T;L^1(\Omega))} \right),  \\
\|A^{-\delta}\partial_t^{\alpha} (w_n-w_0^n)\|_{L^1(0,T;L^2(\Omega))}& \leq C \left(\|w_0^n\|_{L^1(\Omega)} + \|F_n\|_{L^1(0,T;L^1(\Omega))} \right). 
\end{aligned}
\end{equation}

To utilize the Lemma \ref{Lm:Simon_compact}, we now need to prove:
$$
\|w_n(t+h)-w_n(t)\|_{L^1(0,T-h;\mathcal{D}(A^{-\delta}))} \longrightarrow 0
\quad \mbox{ as } h\to 0.
$$
Denote $\widetilde{w}_n:= \partial_t^{\alpha}(A^{-\delta}(w_n-w_0^n))
\in L^{1}(0,T;L^2(\Omega))$. 
Applying the fractional integral $J^{\alpha}$ to both sides, we have 
$\widetilde{w}_n:= A^{-\delta}(w_n-w_0^n) = J^{\alpha} \widetilde{w}_n$. 
Noticing that $w_n(t+h)-w_n(t)=(w_n(t+h)-w_0^n)-(w_n(t)-w_0^n)$, 
and 
$$
\|w_n(t+h)-w_n(t)\|_{L^1(0,T-h;\mathcal{D}(A^{-\delta}))}  
=  \|J^{\alpha} \widetilde{w}_n(t+h) - J^{\alpha} \widetilde{w}_n(t)\|
_{L^1(0,T-h;L^2(\Omega))}.
$$
Now, we represent $J^{\alpha} \widetilde{w}_n(t+h) 
- J^{\alpha} \widetilde{w}_n(t)$:
\begin{equation}
\begin{aligned}
& J^{\alpha} \widetilde{w}_n(t+h) - J^{\alpha} \widetilde{w}_n(t)\\
= & \frac{1}{\Gamma(\alpha)} \int_0^t (t+h-s)^{\alpha-1} \widetilde{w}_n(s)
\mathrm{d}s - \frac{1}{\Gamma(\alpha)} \int_0^t (t-s)^{\alpha-1} 
\widetilde{w}_n(s)\mathrm{d}s \\
+& \int_t^{t+h} (t+h-s)^{\alpha-1} \widetilde{w}_n(s) \mathrm{d} s.
\end{aligned}
\end{equation}
Let 
$$
I(t):= \int_0^t ( (t+h-s)^{\alpha-1}-(t-s)^{\alpha-1}) \widetilde{w}_n(s)
\mathrm{d}s, \quad 
J(t):=\int_t^{t+h} (t+h-s)^{\alpha-1} \widetilde{w}_n(s) \mathrm{d} s.
$$
As for the first term concerning $I(t)$, we have
$$
\|I(t)\|_{\mathcal{D}(A^{-\delta})} \leq \int_0^t \left((t-s)^{\alpha-1} - (t+h-s)^{\alpha-1}\right) \| \widetilde{w}_n(s)\|_{\mathcal{D}(A^{-\delta})} 
\mathrm{d}s.
$$
Integrating both sides with respect to the time variable $t$, we obtain the following estimate:
\begin{equation}
\begin{aligned}
&\int_0^{T-h} \|I(t)\|_{\mathcal{D}(A^{-\delta})} \mathrm{d}t \\
\leq &C \int_0^{T-h}( (T-h-s)^{\alpha} - (T-s)^{\alpha} + h^{\alpha} ) 
\|\widetilde{w}_n(s)\|_{\mathcal{D}(A^{-\delta})} \mathrm{d}s \\
\leq& C \int_0^{T-h} h^{\alpha}  \|\widetilde{w}_n(s)\|_{\mathcal{D}(A^{-\delta})} \mathrm{d}s \\
\leq& C h^{\alpha} \|\widetilde{w}_n\|_{L^1(0,T;\mathcal{D}(A^{-\delta}))}.
\end{aligned}
\end{equation}
After that, we start the estimate of $J(t)$:
\begin{equation}
\begin{aligned}
& \int^{T-h}_0 \|J(t)\|_{\mathcal{D}(A^{-\delta})} \mathrm{d}t \\
\leq &\int^{T-h}_0 \int_t^{t+h} (t+h-s)^{\alpha-1} \|\widetilde{w}_n(s)
\|_{\mathcal{D}(A^{-\delta})} \mathrm{d}s \mathrm{d}t.
\end{aligned}
\end{equation}
Let the integrand $(t+h-s)^{\alpha-1} \|\widetilde{w}_n(s)
\|_{\mathcal{D}(A^{-\delta})}$ be denoted as $b(s,t)$.
Then, by Fubini's theorem, we can exchange the orders of integration:
$$
\int^{T-h}_0 \int_t^{t+h} b(s,t)\mathrm{d}s\mathrm{d}t = \int_0^h \int_0^s b(s,t) \mathrm{d}t\mathrm{d}s+ \int_h^{T-h} \int_{s-h}^s b(s,t) \mathrm{d}t\mathrm{d}s + \int_{T-h}^T \int_{s-h}^{T-h} b(s,t) \mathrm{d}t\mathrm{d}s.
$$
Then the estimate for $J(t)$ can be divided into the following three parts:
\begin{equation}
\begin{aligned}
J_1 &=\int_0^{h} \int_{0}^s (t+h-s)^{\alpha-1} \|\widetilde{w}_n(s)\|
_{\mathcal{D}(A^{-\delta})}   \mathrm{d}t \mathrm{d}s \\
& = \int_0^{h} \|\widetilde{w}_n(s)\|_{\mathcal{D}(A^{-\delta})} 
\left( \int_{0}^s    (t+h-s)^{\alpha-1}   \mathrm{d}t \right) \mathrm{d}s \\
&\leq C h^{\alpha} \|\widetilde{w}_n\|_{L^1((0,T);\mathcal{D}(A^{-\delta}))}
\end{aligned}
\end{equation}
and 
\begin{equation}
\begin{aligned}
J_2 &=\int_h^{T-h} \int_{s-h}^s (t+h-s)^{\alpha-1} \|\widetilde{w}(s)
\|_{\mathcal{D}(A^{-\delta})}   \mathrm{d}t \mathrm{d}s \\
& = \int_h^{T-h} \|\widetilde{w}_n(s)\|_{\mathcal{D}(A^{-\delta})} 
\left( \int_{s-h}^s    (t+h-s)^{\alpha-1}   \mathrm{d}t \right) \mathrm{d}s \\
&\leq C h^{\alpha} \|\widetilde{w}_n\|_{L^1((0,T);\mathcal{D}(A^{-\delta}))}
\end{aligned}
\end{equation}
and
\begin{equation}
\begin{aligned}
J_3 &=\int^T_{T-h} \int_{s-h}^{T-h} (t+h-s)^{\alpha-1} \|\widetilde{w}_n(s)
\|_{\mathcal{D}(A^{-\delta})}   \mathrm{d}t \mathrm{d}s \\
& = \int_{T-h}^{T} \|\widetilde{w}_n(s)\|_{\mathcal{D}(A^{-\delta})} 
\left( \int_{s-h}^{T-h}    (t+h-s)^{\alpha-1}   
\mathrm{d}t \right) \mathrm{d}s \\
&\leq C h^{\alpha} \|\widetilde{w}_n\|_{L^1((0,T);\mathcal{D}(A^{-\delta}))}.
\end{aligned}
\end{equation}
Consequently, 
$$
\|\tau_h w_n-w_n\|_{L^1(0,T;\mathcal{D}(A^{-\delta}))} \leq O(h^{\alpha}).
$$
Thus we have completed the proof of Lemma \ref{Lm:comp}.
\end{proof}

%%%%%%%%%%%%% Section 4 %%%%%%%%%%%%%%%%%%%
\section{Completion of the proof of Theorem \ref{Th:well-pose}}\label{Sec:thm_proof}
\setcounter{section}{4} \setcounter{equation}{0} %% do not forget! %%

\begin{proof}
Henceforth $C>0, C_1 > 0$, etc. denote generic constants which are
independent of $n\in \N$ and the choice of variables $\xi, \eta$.
Henceforth when we write $C=C_0(n)$, etc., we understand that the constant 
depends on $n$ such as $C_0(n)$ above.

We begin by constructing approximating solutions $u_n$ and $v_n$. 
In order to handle 
the nonlinear terms $f$ and $g$, we introduce the truncated functions 
below.  We choose $\psi_1 \in C_0^{\infty}(\R^2)$ such that  
$0\le \psi_1(\xi,\eta) \le 1$ for all $\xi, \eta \in \mathbb{R}$ and
$$
\psi_1(\xi,\eta)=
\left\{\begin{array}{l}
1, \quad \mbox{ if } |\xi|,|\eta|\leq 1, \\ 
0, \quad \mbox{ if }  |\xi|\ge 2 \mbox{ or } |\eta|\geq 2.
\end{array}\right.
$$
Next, we set 
$\psi_n(\xi,\eta)=\psi_1\left(\frac{\xi}{n}, \frac{\eta}{n}\right)$. 
With this choice, $0\leq \psi_n\leq 1$ for all $n\in \N$, and $\psi_n$ tends 
pointwise to 1 as $n$ tends to $\infty$.

We define truncated nonlinear terms as follows
$$
\left\{\begin{array}{l}
f_n(\xi,\eta)= \psi_n(\xi,\eta) f(\xi,\eta), \quad \xi, \eta \in \R,  \\ 
g_n(\xi,\eta)= \psi_n(\xi,\eta) g(\xi,\eta), \quad \xi, \eta \in \R.
\end{array}\right.
$$
Then we can verify that there exists a constant $C_0(n) > 0$ such that 
\begin{equation}\label{Eq:fngn_norm_infty}
\left\{ \begin{array}{l}
\Vert f_n\Vert_{L^{\infty}(\R^2)}, \, \Vert g_n\Vert_{L^{\infty}(\R^2)} \le 
C_0(n), \\
\Vert \ppp_k f_n\Vert_{L^{\infty}(\R^2)}, \,  
  \Vert \ppp_k g_n\Vert_{L^{\infty}(\R^2)} \le C_0(n), \quad k=1,2
\end{array}\right.
\end{equation}
and
\begin{equation}\label{Eq:fngn_00}
f_n(0,\eta) = g_n(\xi,0) = 0 \quad \mbox{ for all } \xi, \eta \in \R.
\end{equation}
Indeed, \eqref{Eq:fngn_00} is verified directly.  Moreover
$$
\vert f_n(r,s)\vert + \vert g_n(r,s)\vert 
\le \sup_{\vert \xi\vert, \vert \eta\vert \le 2n}
\vert f(\xi,\eta)\vert 
+ \sup_{\vert \xi\vert, \vert \eta\vert \le 2n}
\vert g(\xi,\eta)\vert,
$$
for all $r,s\in \R$, which means the first estimate in \eqref{Eq:fngn_norm_infty}.
The rest estimates in \eqref{Eq:fngn_norm_infty} follow from the definition of 
$f_n, g_n$. Furthermore we note that $f_n, g_n$ satisfy Assumption \ref{Ass:nonlin_term}.

A truncated form of equation \eqref{Eq:nonlin_system_formu} is given by:
\begin{equation}\label{Eq:nonlin_system_appro}
\left\{\begin{array}{l}
\partial_t^{\alpha} (u_n-a) = -Au_n + F_n(u_n(x,t), v_n(x,t)),\quad x\in \Omega, 0< t < T,  \\ 
\partial_t^{\alpha}(v_n-b) = -Av_n + G_n(u_n(x,t),v_n(x,t)),\quad x\in \Omega, 0< t < T,  \\
\partial_{\nu} u_n = \partial_{\nu} v_n = 0,\quad  \text{ on } \partial \Omega \times (0,T),
\\
u_n(x,\cdot)-a(x)\in H_{\alpha}(0,T),\ 
v_n(x,\cdot)-b(x)\in H_{\alpha}(0,T),\quad \mbox{ for almost all }
x\in \OOO.
\end{array}\right.
\end{equation}
Here we recall that $-Aw = \Delta w - p_0w$, $Q= \OOO\times (0,T)$, and for $n\in \N$ we set
$$
F_n(u_n,v_n) := f_n(u_n,v_n)+p_0u_n ,\
G_n(u_n,v_n) := g_n(u_n,v_n)+p_0v_n.
$$

Now we can prove:
\begin{lemma}\label{Lm:exist_trunc}
Let $a, b \in H^1(\OOO) \cap L^{\infty}(\OOO)$ and $a, b \ge 0$ 
in $\OOO$.
\\
(i) For $n\in \N$, there exists a unique solution $(u_n,v_n) 
\in L^2(0,T;H^2(\OOO))^2$ such that $u_n-a, v_n-b 
\in H_{\alpha}(0,T;L^2(\OOO))$.
\\
(ii) $u_n, v_n \ge 0$ in $Q$ for each $n\in \N$.
\end{lemma}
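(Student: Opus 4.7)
My plan for (i) is to recast the truncated system as a fixed-point equation built on the linear solution operator. Using $w(w_0,F)$ from Section \ref{Sec:lemma}, I would define
$$
\Phi(u,v) := (\, w(a,F_n(u,v)),\ w(b,G_n(u,v))\,)
$$
and search for a fixed point in $C([0,T^*];L^2(\OOO))^2$. The bounds \eqref{Eq:fngn_norm_infty} imply that $F_n$ and $G_n$ are globally Lipschitz on $\R^2$ with some constant $L=L(n)$. Combining this with the mild-solution representation
$$
u_n(t) = S(t)a + \int_0^t K(t-s) F_n(u_n,v_n)(s)\,ds
$$
and the kernel bound $\|K(t)\|_{L^2 \to L^2} \le C t^{\alpha-1}$ from \eqref{Equ:ken_norm} (with $\gamma=0$), a short calculation shows that $\Phi$ is a contraction provided $T^*$ is small enough in terms of $L(n)$ alone. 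Iterating on successive intervals of length $T^*$ then yields a unique solution on $[0,T]$. Once the fixed point is in hand, the inclusions $F_n(u_n,v_n), G_n(u_n,v_n) \in L^{\infty}(Q) \subset L^2(Q)$ let the linear theory of \cite{KRY} upgrade the fixed point to the claimed regularity $(u_n,v_n) \in L^2(0,T;H^2(\OOO))^2$ with $u_n-a, v_n-b \in H_{\alpha}(0,T;L^2(\OOO))$.

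For (ii), I plan to invoke Lemma \ref{Lm:non_nega} a posteriori, one component at a time. The key observation is that \eqref{Eq:fngn_00} holds for \emph{all} real arguments, so the mean-value theorem gives
$$
f_n(u_n,v_n)(x,t) = c_1(x,t)\,u_n(x,t), \quad c_1(x,t) := \int_0^1 \partial_1 f_n(\tau u_n(x,t), v_n(x,t))\,d\tau,
$$
with $c_1 \in L^{\infty}(Q)$ by \eqref{Eq:fngn_norm_infty}. Substituting this into \eqref{Eq:nonlin_system_appro} and cancelling the $-p_0 u_n$ coming from $-Au_n$ against the $+p_0 u_n$ hidden inside $F_n$, the first equation reduces to
$$
\partial_t^{\alpha}(u_n - a) = \Delta u_n + c_1(x,t) u_n,
$$
which is exactly \eqref{Eq:lin_system} with non-negative initial datum $a$ and vanishing forcing. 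Lemma \ref{Lm:non_nega} then yields $u_n \ge 0$ in $Q$. The symmetric argument, based on $g_n(\xi,0)=0$ and $c_2(x,t) := \int_0^1 \partial_2 g_n(u_n(x,t), \tau v_n(x,t))\,d\tau$, gives $v_n \ge 0$.

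The principal difficulty I anticipate lies not in any single step but in pinning down the correct functional framework in (i): one must arrange matters so that (a) $F_n,G_n$ act as bounded Lipschitz Nemytskii operators, (b) the singular kernel $K(t-s)$ is integrable against $L^2$-valued maps on $(0,T^*)$, and (c) the fixed point found by contraction has enough regularity to feed back into the linear theory of \cite{KRY}. The non-negativity step in (ii) is more conceptual; its only delicate point is that the representation $f_n(u_n,v_n)=c_1 u_n$ must be valid without any a priori sign information on $u_n$ or $v_n$, which is exactly what the extended identity \eqref{Eq:fngn_00} ensures — otherwise the use of Lemma \ref{Lm:non_nega} would be circular.
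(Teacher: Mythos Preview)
Your proposal is correct and follows the same route as the paper: for (i) the paper also invokes the global Lipschitz continuity \eqref{Eq:fngn_norm_infty} and a ``usual iteration method'' (deferring details to \cite{FLY,LY1,LY2}), and for (ii) it likewise combines the vanishing \eqref{Eq:fngn_00} with the mean value theorem to absorb $f_n(u_n,v_n)$ into an $L^{\infty}$ zeroth-order coefficient and then applies Lemma \ref{Lm:non_nega}. The only cosmetic difference in (ii) is that the paper bounds $|f_n(u_n,v_n)| \le C^1_n\,u_n$ with $C^1_n = \operatorname{sgn}(u_n)\,\|\partial_1 f_n\|_{L^{\infty}(\R^2)}$ and carries the non-negative remainder $W_n := C^1_n u_n + f_n(u_n,v_n)$ as the forcing $F$ in Lemma \ref{Lm:non_nega}, whereas your integral mean-value identity $f_n(u_n,v_n)=c_1 u_n$ dispenses with any forcing term.
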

{\it Proof of Lemma \ref{Lm:exist_trunc}.}

Since \eqref{Eq:fngn_norm_infty} implies the global Lipschitz contintuity of $f_n, g_n$ over 
$\R^2$ for each $n$, in terms of $a, b \in H^1(\OOO) \cap L^{\infty}(\OOO)$, 
we can apply a usual iteration method to establish 
the unique existence of $(u_n,v_n) \in L^2(0,T;H^2(\OOO))^2$ 
and $u_n-a, v_n-b \in H_{\alpha}(0,T;L^2(\OOO))$, which complete the proof
of Lemma \ref{Lm:exist_trunc} (i).
Further details can be found, for example, in \cite{FLY,LY1,LY2}.

Next we will prove (ii).  First we can find functions $C^1_n(x,t),C^2_n(x,t)\in L^{\infty}(Q)$ such that 
\begin{equation}\label{Eq:bdd_fngn}
\begin{aligned}
&\vert  f_n(u_n(x,t), v_n(x,t))\vert  \le C^1_n(x,t) u_n(x,t),  \\
&\vert  g_n(u_n(x,t), v_n(x,t))\vert  \le C^1_n(x,t) v_n(x,t), \quad \mbox{ for almost all } (x,t) \in Q.
\end{aligned}                   
\end{equation}
Actually, we can set $C^1_n(x,t)=\operatorname{sgn}(u_n(x,t)) \|\partial_1 f_n\|_{L^{\infty}(Q)}$ and $C^2_n(x,t)=\operatorname{sgn}(v_n(x,t)) \|\partial_2 f_n\|_{L^{\infty}(Q)}$, where
$$
\operatorname{sgn}(\xi):=
\left\{\begin{array}{r}
 1, \quad \mbox{ if } |\xi|\geq 0, \\ 
-1, \quad \mbox{ if }  |\xi|<0.
\end{array}\right.
$$
Indeed, by \eqref{Eq:fngn_norm_infty} and \eqref{Eq:fngn_00}, we apply the mean value theorem and obtain
\begin{align*}
& \vert f_n(u_n(x,t), v_n(x,t)) \vert 
= \vert f_n(u_n(x,t), v_n(x,t)) - f_n(0, v_n(x,t))\vert\\
\le& \Vert \ppp_1f_n\Vert_{L^{\infty}(\R^2)}\vert u_n(x,t)\vert.
\end{align*}
The proof for $g_n$ is similar and so the verification of \eqref{Eq:bdd_fngn} is 
complete.

Then we can rewrite \eqref{Eq:nonlin_system_appro} as
\begin{equation}
\left\{\begin{array}{l}
\partial_t^{\alpha} (u_n-a) = \Delta u_n - C^1_n(x,t)u_n(x,t)
+ W_n(x,t),\quad 
(x,t) \in Q, \\ 
\partial_t^{\alpha}(v_n-b) = \Delta v_n - C_n^2(x,t)v_n(x,t) + V_n(x,t), \quad 
(x,t) \in Q, \\
\partial_{\nu} u_n = \partial_{\nu} v_n = 0, \quad \text{ on } 
\partial\Omega\times (0,T), \\
u_n(x,\cdot)-a(x)\in H_{\alpha}(0,T), \quad 
v_n(x,\cdot)-b(x)\in H_{\alpha}(0,T), 
\end{array}\right.
\end{equation}
where we set 
\begin{align*}
W_n(x,t) := C_n^1(x,t)u_n(x,t) + f_n(u_n(x,t), v_n(x,t)),\\
V_n(x,t) := C_n^2(x,t)v_n(x,t) + g_n(u_n(x,t), v_n(x,t)).
\end{align*}
We note that $W_n,V_n \ge 0$ in $Q$ by \eqref{Eq:bdd_fngn}.  
Therefore we apply Lemma \ref{Lm:non_nega}  
to $u_n$ and $v_n$, so that $u_n, v_n \ge 0$ in $Q$.
Thus the proof of Lemma \ref{Lm:exist_trunc} is complete.
$\blacksquare$

Next, integrating the first two equations in \eqref{Eq:nonlin_system_appro}
with respect to $x$, we obtain
\begin{equation}\label{Eq:nonlin_system_inte_appro}
\begin{aligned}
\partial^{\alpha}_t \int_{\Omega} (u_n-a)(x,t) \mathrm{d}x 
&= \int_{\Omega}f_n(u_n(x,t),v_n(x,t)) \mathrm{d}x,\\
\partial^{\alpha}_t \int_{\Omega} (v_n-b)(x,t) \mathrm{d}x 
&= \int_{\Omega}g_n(u_n(x,t),v_n(x,t))  \mathrm{d}x.
\end{aligned}
\end{equation}
Here we use the fact that $\int_{\Omega} \Delta u_n(x,t) \mathrm{d}x
= \int_{\Omega} \Delta v_n(x,t) \mathrm{d}x=0$ because 
$\partial_{\nu}u_n=\partial_{\nu}v_n=0$ on $\partial\Omega \times (0,T)$.
Summing up the above equations in $u_n$ and $\lambda v_n$, we can obtain
\begin{equation}
\partial_t^{\alpha} \int_{\Omega} (u_n-a+\lambda v_n- \lambda b)(x,t) \mathrm{d}x=\int_{\Omega} f_n(u_n,v_n)+\lambda g_n(u_n,v_n) \mathrm{d}x \leq 0.
\end{equation}

By applying the fractional integral to both sides, by 
$u_n, v_n \ge 0$ in $Q$, we obtain
\begin{equation}\label{Eq:norm_est_un_vn}
\|u_n(\cdot, t)\|_{L^1(\Omega)} + \|v_n(\cdot, t)\|_{L^1(\Omega)} 
\leq C( \|a\|_{L^1(\Omega)}+ \|b\|_{L^1(\Omega)}).
\end{equation}
Integrating each equation of \eqref{Eq:nonlin_system_appro} with respect 
to $x$ and $t$ and using $\ppp_{\nu}u_n = \ppp_{\nu}v_n = 0$ 
on $\ppp\OOO\times (0,T)$, we derive
\begin{equation}\label{Eq:norm_est_unvn2}
\begin{aligned}
\|f_n(u_n,v_n)\|_{L^1(0,T;L^1(\Omega))} = \|J^{1-\alpha}(u_n-a)\|
_{L^1(0,T;L^1(\Omega))},\\
\|g_n(u_n,v_n)\|_{L^1(0,T;L^1(\Omega))} = \|J^{1-\alpha}(v_n-b)\|_{L^1(0,T;L^1(\Omega))}.
\end{aligned}
\end{equation}
Since $\Vert J^{1-\alpha}w(x,\cdot)\Vert_{L^1(0,T)}
\le C\Vert w(x,\cdot)\Vert_{L^1(0,T)}$ for $w \in L^1(0,T;L^1(\OOO))$ by Young's inequality, we see 
\begin{equation}\label{Eq:fngn_norm_L1}
\begin{aligned}
&\|J^{1-\alpha}(u_n-a)\|_{L^1(0,T;L^1(\Omega))}
+ \|J^{1-\alpha}(v_n-b)\|_{L^1(0,T;L^1(\Omega))} \\
\le &C(\Vert u_n-a\|_{L^1(0,T;L^1(\Omega))}
+ \Vert v_n-b\|_{L^1(0,T;L^1(\Omega))}).
\end{aligned}
\end{equation}
Therefore, in terms of \eqref{Eq:norm_est_un_vn} - \eqref{Eq:fngn_norm_L1}, we reach 
\begin{equation}\label{Eq:norm_est_appro}
\begin{aligned}
&\|u_n(\cdot, t)\|_{L^1(\Omega)}+\|v_n(\cdot, t)\|_{L^1(\Omega)} 
+ \|f_n(u_n, v_n)\|_{L^1(0,T;L^1(\Omega))}+ \|g_n(u_n, v_n)\|
_{L^1(0,T;L^1(\Omega))} \\
\le &C(\Vert a\Vert_{L^1(\OOO)} + \Vert b\Vert_{L^1(\OOO)}),
\end{aligned}
\end{equation}
for all $n\in \mathbb{N}$.
We emphasize that the constant $C>0$ is independent of $n\in \N$.
By \eqref{Eq:norm_est_appro}, we can obtain
\begin{equation}\label{Eq:L1normFG}
\Vert f_n(u_n,v_n)\Vert_{L^1(Q)} + \Vert g_n(u_n,v_n)\Vert_{L^1(Q)} \le C
\end{equation}
for all $n \in \N$.

Next we will establish the convergence of $f_n(u_n,v_n)$ and $g_n(u_n,v_n)$.
We set $a_0:= \|a+\lambda b\|_{L^{\infty}(\Omega)}$ and
$W:= u_n + \la v_n$ in $Q$.  Hence \eqref{Eq:nonlin_system_appro} implies
\begin{equation}\label{Eq:fn_esti_1}
\left\{\begin{array}{l}
\partial_t^{\alpha} (W-a- \lambda b) = \Delta W 
+ (f_n(u_n,v_n)+\la g_n(u_n,v_n)), \quad \mbox{ in }Q,  \\ 
\partial_{\nu}W = 0, \quad \mbox{ on }\partial\Omega \times (0,T).
\end{array}\right.
\end{equation}
By the definition of $f_n, g_n$, we have
\begin{equation}\label{Eq:fngn_comp}
f_n(\xxee) + \la g_n(\xxee)
= \psi_n(\xxee)(f(\xxee) + \la g(\xxee)) \le 0 \quad 
\mbox{ for all } \xi, \eta \in \R. 
\end{equation}
On the other hand, $\www{W}(x,t) := \Vert a+\la b\Vert_{L^{\infty}(\OOO)}$
satisfies 
$$
\left\{ \begin{array}{l}
 \pppa (\www{W} - \Vert a+\la b\Vert_{L^{\infty}(\OOO)})
= \Delta\www{W}, \quad \mbox{ in }Q, \\
 \ppp_{\nu}\www{W} = 0, \quad \mbox{ on } \ppp\OOO \times (0,T).
\end{array}\right.
$$
Applying the comparison principle\cite[(Corollary 1)]{LY1} to $W$ and 
$\www{W}$, in terms of \eqref{Eq:fngn_comp}, we obtain
$$
W(x,t) \le \www{W}(x,t), \quad (x,t) \in Q.
$$
By Lemma \ref{Lm:exist_trunc} (ii), we have $W(x,t) = u_n(x,t) + \la v_n(x,t) \ge 0$ for
$(x,t) \in Q$.  Therefore,
$$
0 \le u_n(x,t) + \la v_n(x,t) \le \Vert a + \la b\Vert_{L^\infty(\OOO)},
\quad (x,t) \in Q.
$$
Since $\xi + \eta \le \max\left\{1, \, \frac{1}{\la}\right\}
(\xi + \la \eta)$ for $\xi, \eta \ge 0$, then we have
\begin{equation}\label{Eq:est_un_vn}
\begin{aligned}
0 &\leq u_n+v_n \le \max\left\{ 1, \, \frac{1}{\la}\right\}(u_n+\la v_n)
\\
&\leq C\Vert a + \la b\Vert_{L^\infty(\OOO)}
=: C_1, \quad \mbox{ almost everywhere in }Q.
\end{aligned}
\end{equation}
Because $u_n,v_n$ are both non-negative, we deduce $u_n,v_n \leq C_1$. Therefore
\begin{equation}\label{Eq:fngn_bdd_unif}
\begin{aligned}
& \vert f_n(u_n(x,t), v_n(x,t)) \vert 
 + \vert g_n(u_n(x,t), v_n(x,t)) \vert 
 \\
\le& \sup_{\vert \xi\vert, \vert \eta\vert \le C_1} \vert f_n(\xi,\eta)\vert
+ \sup_{\vert \xi\vert, \vert \eta\vert \le C_1} \vert g_n(\xi,\eta)\vert
\\
\le& \sup_{\vert \xi\vert, \vert \eta\vert \le C_1} \vert f(\xi,\eta)\vert
+ \sup_{\vert \xi\vert, \vert \eta\vert \le C_1} \vert g(\xi,\eta)\vert
=: C_2,
\end{aligned}
\end{equation}
for all $n \in \N$ and almost all $(x,t) \in Q$.  We note that $C_2$ does not depend on $n$.

Then we can choose a subsequence $n^{\prime} \in \N$ such that 
\begin{equation}\label{Eq:fn_f_point}
\begin{aligned}
\lim_{n^{\prime}\to \infty} f_{n^{\prime}}(u_{n^{\prime}}(x,t), \,v_{n^{\prime}}(x,t)) = f(u(x,t),\, v(x,t)),
\quad \mbox{ for almost all } (x,t) \in Q, \\
\lim_{n^{\prime}\to \infty} g_{n^{\prime}}(u_{n^{\prime}}(x,t), \,v_{n^{\prime}}(x,t)) = g(u(x,t),\, v(x,t)),
\quad \mbox{ for almost all }(x,t) \in Q.
\end{aligned}
\end{equation}
In order to prove this, we can assume that $n \ge \frac{C_1}{2}$.
Then, by the definition of $f_n$ and $g_n$, we deduce
\begin{equation}\label{Eq:fngn_lim}
\begin{aligned}
f_n(u_n(x,t), \,v_n(x,t)) = f(u_n(x,t), \,v_n(x,t)), 
\\
g_n(u_n(x,t), \,v_n(x,t)) = g(u_n(x,t), \,v_n(x,t)),
\end{aligned}
\end{equation}
for almost all $(x,t) \in Q:= \Omega \times (0,T)$ if $n\geq \frac{C_1}{2}$. On the other hand, in terms of \eqref{Eq:norm_est_appro} we apply Lemma \ref{Lm:comp} to see that $(u_n,v_n), n\in \mathbb{N}$ contains a convergent subsequence in $L^1(Q)$. Therefore, we can find a subsequence $\{ n^{\prime}\} \subset  \N$ and some $(u,v)\in (L^1(Q))^2$ such that $(u_{n^{\prime}}, v_{n^{\prime}}) \longrightarrow (u,v)$ in $L^1(Q)^2$ as 
$n^{\prime}\to \infty$.
Hence, choosing further a subsequence of $\{n^{\prime}\}$, denoted by the same 
notations, such that
\begin{equation}\label{Eq:un_vn_conv}
\begin{aligned}
&\lim_{n^{\prime} \to \infty} u_{n^{\prime}}(x,t) = u(x,t), \\
&\lim_{n^{\prime} \to \infty} v_{n^{\prime}}(x,t) = v(x,t), \quad
\mbox{ for almost all }(x,t) \in Q.
\end{aligned}
\end{equation}
Hence, by $f,g \in C(\R^2)$ and \eqref{Eq:fngn_lim}-\eqref{Eq:un_vn_conv}, we verified \eqref{Eq:fn_f_point}.

Finally considering \eqref{Eq:fngn_bdd_unif} and \eqref{Eq:fn_f_point}, we apply the Lebesgue convergence
theorem to verify 
$$
\begin{aligned}
\lim_{n\to \infty}\Vert f_n(u_n,v_n) - f(u,v)\Vert_{L^1(Q)} = 0, \\
\lim_{n\to \infty}\Vert g_n(u_n,v_n) - g(u,v)\Vert_{L^1(Q)} = 0.
\end{aligned}
$$

Multiplying both sides of  \eqref{Eq:nonlin_system_appro} with $\psi$, after integrating, we obtain
\begin{equation}\label{Equ:nonlin_system_appro_distr}
\begin{aligned}
(u_n-a, (\partial_t^{\alpha})^{*} \psi) &= (u_n, -A \psi) + (F_n, \psi), \\
(v_n-b, (\partial_t^{\alpha})^{*} \psi) &= (v_n,-A \psi) + (G_n, \psi),
\end{aligned}
\end{equation}
for all $\psi \in C^{\infty} (\ooo{Q})$ satisfying  
 $\partial_{\nu} \psi =0 $ in $\partial\Omega \times (0,T)$ and $\psi(\cdot, T)=0$ in $\Omega$.
Passing $n$ in \eqref{Equ:nonlin_system_appro_distr} to the limit, we can see
\begin{equation}
\begin{aligned}
(u-a, (\partial_t^{\alpha})^{*} \psi) &= (u,\Delta \psi) + (f(u,v), \psi), \\
(v-b, (\partial_t^{\alpha})^{*} \psi) &= (v,\Delta \psi) + (g(u,v), \psi),
\end{aligned}
\end{equation}
for all $\psi \in C^{\infty}(\ooo{Q})$ 
satisfying $\partial_{\nu} \psi =0 $ in $\partial\Omega \times (0,T)$ 
and $\psi(\cdot, T)=0$ in $\Omega$.
This implies that $(u,v)$ is a weak solution to equation \eqref{Eq:nonlin_system_formu}, we have completed the proof of Theorem \ref{Th:well-pose}.
\end{proof}

%%% .............

%%%%%%%%%%%% Section 5 %%%%%%%%%%%%%%%%%%%%%%%%%%%%%%%%%%%%%%%%%%%%%%%

\section{Blow up for a system 
for diffusion equation with convex nonlinear terms} \label{Sec:blow}
\setcounter{section}{5} \setcounter{equation}{0} %% do not forget! %%
In this section, we show that the global existence does not hold true
for some nonlinear terms, which is our second main result. Here, we do not require the coefficients in front of the spatial diffusion terms to be equal. Specifically, for the second line in \eqref{Eq:nonlin_system}, we consider $$
\partial_t^{\alpha}(v-b)=d \Delta v + g(u,v).
$$
For the statement,
we introduce the following assumption for the nonlinear terms 
$f$ and $g$.

\begin{assumption}\label{Ass:convex}
For $f,g\in C^{\infty}(\mathbb{R}^2)$, there exist constants $\lambda>0, p>1$ and $C=C_{p,\lambda}>0$ such that
\begin{equation}\label{Eq:conve_assum}
f(\xi,\eta)+\lambda g(\xi,\eta)\geq C(\xi^p+\eta^p),\quad \text{ for all } 
\xi, \eta \geq 0.
\end{equation}
\end{assumption}

Now we are ready to state our main result on the blow-up with upper bound 
of the blow-up time for $p>1$.
\begin{theorem}\label{Th:blow-up}
Let $f,g$ satisfy the Assumption \ref{Ass:convex} and $a, b\in H^{2\gamma}(\Omega)$, $\frac{3}{4}<\gamma\leq 1$, satisfy $\partial_{\nu}a=\partial_{\nu}b=0$ on $\partial\Omega$ and $a, b\geq 0,\not\equiv 0$ in $\Omega$. Then there exists some $T=T_{\alpha,p,a,b}>0$ such that the solutions of 
\eqref{Eq:nonlin_system_formu} satisfying 
$$
u, v\in C([0,T];H^2(\Omega)),\quad u-a, v-b \in H_{\alpha}(0,T;L^2(\Omega)),
$$
exist for $0<t<T_{\alpha,p,a,b}$ and $$
\lim_{t \uparrow T_{\alpha,p,a,b}} \left(\|u(\cdot, t)\|_{L^1(\Omega)}+\|v(\cdot, t)\|_{L^1(\Omega)}  \right)=\infty
$$
holds. Moreover, we can bound $T_{\alpha,p,a,b}$ from above as 
$$
T_{\alpha,p,a,b}\leq \left( \frac{1}{(p-1) \Gamma(2-\alpha)C_{p,\lambda}^{-1}\left(\frac{1}{|\Omega|} \int_{\Omega} a +\lambda b \mathrm{~d} x\right)^{p-1}} \right)^{\frac{1}{\alpha}}=:T^*(\alpha,p,a,b).
$$
\end{theorem}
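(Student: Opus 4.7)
The plan is to reduce the system to a scalar fractional differential inequality for the weighted spatial average
\[M(t) := \frac{1}{|\Omega|}\int_{\Omega}(u+\lambda v)(x,t)\,dx\]
and then adapt the classical ODE blow-up argument $\frac{d}{dt}M^{1-p}\le -(p-1)c$ to the fractional setting. As a preliminary, I would establish local existence of a solution $(u,v)\in C([0,T_{\max});H^{2}(\Omega))^{2}$ with $u-a,\,v-b\in H_{\alpha}(0,T_{\max};L^{2}(\Omega))$ by a standard contraction-mapping argument on the mild-solution formula \eqref{Equ:kernel_expre} together with the estimates \eqref{Equ:ken_norm}; here one uses that $f,g\in C^{\infty}$ are locally Lipschitz on bounded sets, that $H^{2\gamma}(\Omega)\hookrightarrow L^{\infty}(\Omega)$ for $\gamma>3/4$ (with $d\le 3$), and the compatibility $\partial_{\nu}a=\partial_{\nu}b=0$. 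Non-negativity of $u$ and $v$ on the existence interval follows from Lemma \ref{Lm:non_nega} applied to the appropriate linearization, exactly as in the proof of Lemma \ref{Lm:exist_trunc}(ii).

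Next I would integrate each equation over $\Omega$; the Neumann condition eliminates the Laplacian terms, yielding
\begin{equation*}
\partial_t^{\alpha}\!\int_{\Omega}(u-a)\,dx=\int_{\Omega}f(u,v)\,dx,\qquad
\partial_t^{\alpha}\!\int_{\Omega}(v-b)\,dx=\int_{\Omega}g(u,v)\,dx.
\end{equation*}
Combining these with weights $(1,\lambda)$, dividing by $|\Omega|$, and using Assumption \ref{Ass:convex} together with Jensen's inequality applied to the convex map $\xi\mapsto\xi^{p}$, as well as the elementary bound $\xi^{p}+\eta^{p}\ge c_{p,\lambda}(\xi+\lambda\eta)^{p}$ valid for $\xi,\eta\ge 0$, I obtain
\begin{equation*}
\partial_t^{\alpha}(M-m_{0})\ge C'_{p,\lambda}\,M(t)^{p},\qquad
m_{0}:=\frac{1}{|\Omega|}\int_{\Omega}(a+\lambda b)\,dx,
\end{equation*}
with $m_0>0$ by the non-triviality of $a$ and $b$.

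The concluding step is a scalar blow-up result. The function $F(\xi)=\xi^{1-p}$ is convex and decreasing on $(0,\infty)$, so an Alikhanov-type fractional chain-rule inequality gives
\[\partial_t^{\alpha}F(M)\le F'(M)\,\partial_t^{\alpha}M = -(p-1)M^{-p}\,\partial_t^{\alpha}M \le -(p-1)C'_{p,\lambda}.\]
Applying $J^{\alpha}$ and using $M(0)=m_{0}$ then yields an inequality of the form
\[M(t)^{1-p}\le m_{0}^{1-p}-\frac{(p-1)\,C'_{p,\lambda}\,t^{\alpha}}{\Gamma(2-\alpha)},\]
whose right-hand side becomes non-positive at some $T\le T^{*}(\alpha,p,a,b)$; this forces $M(t)\to\infty$, and hence $\|u(\cdot,t)\|_{L^{1}(\Omega)}+\|v(\cdot,t)\|_{L^{1}(\Omega)}\to\infty$, in finite time.

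The main obstacle I anticipate is the rigorous justification of the fractional chain-rule step for $M\in H_{\alpha}(0,T)$ which need not be smooth in $t$, together with the precise identification of the constant so that $\Gamma(2-\alpha)$ (rather than the naive $\Gamma(1+\alpha)$ produced by direct $J^{\alpha}$-integration) appears in the final bound. A cleaner alternative is to compare $M$ with the maximal solution $N$ of the associated equality $\partial_t^{\alpha}(N-m_{0})=C'_{p,\lambda}N^{p}$ via a monotone Volterra comparison (since $N\mapsto N^p$ is non-decreasing on $[0,\infty)$) and then to test the equation for $N$ against $(T^{*}-t)_{+}^{1-\alpha}$; the right-sided fractional derivative of this test function produces the factor $\Gamma(2-\alpha)$ naturally and yields the claimed upper bound on the blow-up time. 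A secondary technical point is bootstrapping an $L^{1}$ blow-up into blow-up of the $H^{2}$ norm so that the continuation principle identifies the two blow-up times.
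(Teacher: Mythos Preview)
Your reduction to the scalar inequality $\partial_t^{\alpha}(M-m_0)\ge C_0 M^p$ via spatial averaging, the Neumann condition, Assumption~\ref{Ass:convex}, and Jensen/H\"older is exactly what the paper does in its Step~2 (the paper works with the un-normalized $\theta(t)=\int_\Omega(u+\lambda v)\,dx$, but this is cosmetic).

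The genuine divergence is in how the scalar inequality is converted into a blow-up bound. The paper does \emph{not} use an Alikhanov-type chain rule. Instead it follows \cite{FLY}: one writes down an explicit one-parameter family of candidate lower solutions
\[
\underline{\theta}(t)=m_0\Bigl(\frac{T}{T-t}\Bigr)^m,\qquad m\in\mathbb{N},
\]
and verifies by direct computation that
\[
d_t^{\alpha}\underline{\theta}(t)\le \frac{m\,m_0\,T^{m+1-\alpha}}{\Gamma(2-\alpha)}\,(T-t)^{-(m+1)},
\]
so that $\partial_t^{\alpha}(\underline{\theta}-m_0)\le C_0\,\underline{\theta}^{\,p}$ holds on $(0,T-\varepsilon)$ once $m\ge\frac{1}{p-1}$ and $T\ge T^*(\alpha,p,a,b)$. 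A comparison lemma for fractional differential inequalities (the paper's Lemma~\ref{Le:compa}) then gives $\theta\ge\underline{\theta}$, forcing blow-up by $T^*$.

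This route buys exactly the two things you flagged as obstacles. First, the factor $\Gamma(2-\alpha)$ drops out of the elementary estimate for $d_t^{\alpha}\bigl((T-t)^{-m}\bigr)$, so the constant in the theorem is obtained on the nose; your Alikhanov argument, as you note, produces $\Gamma(1+\alpha)$ and therefore proves blow-up but with a different (and in general not equal) upper bound for the blow-up time. Second, the comparison lemma is stated for $H_\alpha\cap C[0,T]$ functions and its proof does not require any fractional chain rule, so the regularity issue you raised simply does not arise. Your proposed alternative (test against $(T^*-t)_+^{1-\alpha}$) is also viable in principle, but the explicit sub-solution plus comparison is what the paper actually does, and it is shorter.
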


\begin{proof}
\textbf{Step 1.}
Our proof is similar to the one for a single equation
(\cite{FLY}).  We show the following two lemmas from \cite{FLY}.

\begin{lemma}
Let $h\in L^2(0,T)$ and $c\in C[0,T]$. Then there exists a unique solution $y\in H_{\alpha}(0,T)$ to
$$
\partial_t^{\alpha} y - c(t)y=h,\quad 0<t<T.
$$
Moreover, if $h\geq 0$ in $(0,T)$, then $y\geq 0$ in $(0,T)$.
\end{lemma}

\begin{lemma}\label{Le:compa}
Let $c_0>0, a_0\geq 0, p>1$ be constants and $y-a_0,z-a_0\in H_{\alpha}(0,T)\cap C[0,T]$ satisfy
$$
\partial_t^{\alpha}(y-a_0)\geq c_0y^p,\quad \partial_t^{\alpha}(z-a_0)\leq c_0z^p,\quad \text{ in } (0,T).
$$
Then $y\geq z$ in $(0,T)$.
\end{lemma}

\textbf{Step 2.}
We set 
$$
\theta(t):=\int_{\Omega}\left( u(x,t)+\lambda v(x,t) \right)\mathrm{d}x=\int_{\Omega}\left( (u-a)+\lambda (v-b) \right)(x,t)\mathrm{d}x+m_0
$$
where $m_0:=\int_{\Omega}(a+\lambda b)\mathrm{d}x$. Here we see that $m_0>0$ because $a, b\geq 0,\not\equiv 0$ in $\Omega$ and $\lambda>0$ by the assumption of Theorem \ref{Th:blow-up}. Then by \eqref{Eq:nonlin_system} we have
$$
\begin{aligned}
\partial_t^{\alpha}(\theta(t)-m_0) &= \int_{\Omega}(\Delta u +\lambda d \Delta v)(x,t) \mathrm{d}x + \int_{\Omega}(f(u,v) +\lambda g(u,v))(x,t) \mathrm{d}x \\
&= \int_{\Omega}(f(u,v) +\lambda g(u,v))(x,t) \mathrm{d}x.
\end{aligned}
$$
On the other hand, we have
$$
\theta(t)=\int_{\Omega} (u+\lambda v)\mathrm{d}x \leq \left( \int_{\Omega} \mathrm{d}x \right)^{\frac{1}{p^{\prime}}}  \left( \int_{\Omega} (u+\lambda v)^p \mathrm{d}x \right)^{\frac{1}{p}}=|\Omega|^{\frac{1}{p^{\prime}}} \left( \int_{\Omega} (u+\lambda v)^p \mathrm{d}x \right)^{\frac{1}{p}}
$$
with $\frac{1}{p^{\prime}}+\frac{1}{p}=1$, and $(u+\lambda v)^p\leq C_{p,\lambda}(u^p+v^p)$ for $u,v\geq 0$. Hence, \eqref{Eq:conve_assum} yields
$$
\begin{aligned}
  \theta^p(t)&\leq |\Omega|^{\frac{p}{p^{\prime}}} \int_{\Omega} (u+\lambda v)^p \mathrm{d}x\leq C_{p,\lambda} |\Omega|^{\frac{p}{p^{\prime}}} \int_{\Omega} (u^p+v^p)\mathrm{d}x\\
&\leq C_{p,\lambda} |\Omega|^{\frac{p}{p^{\prime}}}\int_{\Omega}(f(u,v) +\lambda g(u,v))(x,t) \mathrm{d}x.
\end{aligned}
$$
So we obtain
$$
\partial_t^{\alpha}(\theta(t)-m_0)
\geq C_{p,\lambda}^{-1} |\Omega|^{-\frac{p}{p^{\prime}}} \theta^p(t),
\quad 0<t<T.
$$

\textbf{Step 3.} 
This step is devoted to the construction of a lower solution 
$\underline{\theta}(t)$ satisfying
\begin{equation}\label{Eq:lower_eta}
\partial_t^{\alpha}(\underline{\theta}(t)-m_0)\leq C_0 \underline{\theta}^p(t),
\quad 0<t<T-\epsilon,\quad \lim_{t \uparrow T} \underline{\theta}(t)=\infty.
\end{equation}
Here we set $C_0:=C_{p,\lambda}^{-1} |\Omega|^{-\frac{p}{p^{\prime}}}$.
The construction of $\underline{\theta}(t)$ is same as the proof in \cite{FLY}.
As a possible lower solution, we consider
$$
\underline{\theta}(t):=m_0 \left( \frac{T}{T-t} \right)^m,\ m\in \mathbb{N}.
$$
By the definition $\partial_t^{\alpha}(\underline{\theta}(t)-a_0)(t)
=\mathrm{d}^{\alpha}_t \underline{\theta}(t)$. We can get 
$$
\partial_t^{\alpha}(\underline{\theta}(t)-m_0)(t)
=\mathrm{d}^{\alpha}_t \underline{\theta}(t)=m_0T^m \mathrm{d}^{\alpha}_t \left( \frac{1}{(T-t)^m} \right) \leq \frac{m_0 T^{m+1-\alpha}m}{\Gamma(2-\alpha)} \frac{1}{(T-t)^{m+1}}.
$$
This equation holds for arbitrary $m\in \mathbb{N}, T>0$ and $0<t<T-\epsilon$.

Finally we claim that for any $p>1$ and $m_0>0$, there exist constants $m\in \mathbb{N}$ and $T>0$ such that
\begin{equation}\label{Eq:est_eta}
\frac{m_0T^{m+1-\alpha}m}{\Gamma(2-\alpha)} \frac{1}{(T-t)^{m+1}}
\leq C_0 \underline{\theta}^p(t)=\frac{C_0 m_0^pT^{mp}}{(T-t)^{mp}},\ 0<t<T-\epsilon.
\end{equation}
Therefore, if
\begin{equation}
\begin{aligned} T & \geq\left(\frac{m}{\Gamma(2-\alpha) C_{0} m_{0}^{p-1}}\right)^{\frac{1}{\alpha}} \geq\left(\frac{1}{(p-1) \Gamma(2-\alpha) C_{0} m_{0}^{p-1}}\right)^{\frac{1}{\alpha}} \\ & =\left\{(p-1) \Gamma(2-\alpha)C_{p,\lambda}^{-1}\left(\frac{1}{|\Omega|} \int_{\Omega} (a +\lambda b) \mathrm{~d} x
\right)^{p-1}\right\}^{-\frac{1}{\alpha}}
=: T^{*}(\alpha, p, a,b),\end{aligned}
\end{equation}
then \eqref{Eq:est_eta} is satisfied.

Consequently, we can obtain 
$$
\underline{\theta}(t):=m_0 \left( \frac{T^{*}(\alpha, p, a,b)}{T^{*}(\alpha, p, a,b)-t} \right)^m
$$
satisfies \eqref{Eq:lower_eta}.

Now it suffices to apply Lemma \ref{Le:compa} to get
$$
\theta(t)\geq \underline{\theta}(t),\quad 0\leq t\leq T^{*}(\alpha, p, a,b)-\epsilon.
$$
Since $\epsilon>0$ was arbitrarily chosen, we obtain
$$
\int_{\Omega}\left( u(x,t)+\lambda v(x,t) \right)\mathrm{d}x
= \theta(t)\geq \underline{\theta}(t) 
=  m_0 \left( \frac{T^{*}(\alpha, p, a,b)}{T^{*}(\alpha, p, a,b)-t} \right)^m.
$$
This means that the solution $u+v$ cannot exist for $t>T^{*}(\alpha, p, a,b)$. Hence, the blow-up time $T(\alpha, p, a,b)\leq T^{*}(\alpha, p, a,b)$. The proof of Theorem \ref{Th:blow-up} is complete.
\end{proof}

%%%%%%%%%% Section 6 %%%%%%%%%%%%%%%%%%%%%%%%%%%%%%%%%%%%%%%%%
\section{Concluding remarks} \label{Sec:concl}
\setcounter{section}{6} \setcounter{equation}{0} %% do not forget! %%

It's worth mentioning that the conditions on $f$ and $g$ in the Assumption \ref{Ass:nonlin_term} are $f(0,\eta)= g(\xi,0)= 0$ for all $\xi\geq0$ and $\eta \geq 0$, and $f, g$ are local Lipschitz continuous. If we improve the regularity of $f$ and $g$ such that $f,g\in C^1(\mathbb{R}^2)$, we can obtain the non-negativity by comparison principle.
For simplicity, we choose an initial value $a=0$, and denote 
$\widetilde{u}$ as the solution
of
$$
\partial_t^{\alpha} \widetilde{u} 
= \Delta \widetilde{u} + f(\widetilde{u},\widetilde{v}) 
- f(0,\widetilde{v})
$$
Noticing $f(\widetilde{u},\widetilde{v}) - f(0,\widetilde{v}) 
= \partial_1 f(\xi,\widetilde{v})\widetilde{u}$, we can rewrite the above 
equation as 
$$
\partial_t^{\alpha} \widetilde{u} = \Delta \widetilde{u} 
+ \partial_1 f(\xi,\widetilde{v})\widetilde{u}.
$$
Here $\xi$ is some number between $0$ and $\widetilde{u}(x,t)$.
Similarly, we obtain the equation for $\widetilde{v}$ as
$$
\partial_t^{\alpha} \widetilde{v} = \Delta \widetilde{v} 
+ \partial_2 g(\widetilde{u},\eta)\widetilde{v},
$$
where $\eta$ is some number between $0$ and $\widetilde{v}$.
Therefore, by the comparison principle\cite{LY2} and Lemma \ref{Lm:non_nega}, 
we can derive the non-negativity of the solutions $\widetilde{u}$ and $\widetilde{v}$.

Moreover, it can be observed that limited in $L^1(\Omega\times (0,T))$, 
we can derive the global existence of solutions. For the uniqueness of solutions, we need to impose stronger priori assumptions on the right-hand side term and the initial values. Similarly to \cite{LY2}, we impose the following requirements on the nonlinear term $g(u,v)$. For $g:\mathcal{D}(A_0^{\gamma} \times A_0^{\gamma})\to L^2(\Omega)$, we assume that for some constant $m>0$, there exists a constant $C_g=C_g(m)>0$ such that
$$
\left\{\begin{array}{l}\text { (i) }
\|g(u,\cdot)\| + \|g(\cdot,v)\| \leq C_{g},\quad \left\|g\left(u_{1},\cdot \right)-g\left(u_{2},\cdot \right)\right\| \leq C_{g}\left\|u_{1}-u_{2}\right\|_{\mathcal{D}\left(A_{0}^{\gamma}\right)},
\\ \left\|g\left(\cdot,v_{1}\right)-g\left(\cdot,v_{2}\right)\right\| \leq C_{g}\left\|v_{1}-v_{2}\right\|_{\mathcal{D}\left(A_{0}^{\gamma}\right)}
\\ \text { if }\|u\|_{\mathcal{D}\left(A_{0}^{\gamma}\right)},\|v\|_{\mathcal{D}\left(A_{0}^{\gamma}\right)},\left\|u_{1}\right\|_{\mathcal{D}\left(A_{0}^{\gamma}\right)},\left\|u_{2}\right\|_{\mathcal{D}\left(A_{0}^{\gamma}\right)},\left\|v_{1}\right\|_{\mathcal{D}\left(A_{0}^{\gamma}\right)},\left\|v_{2}\right\|_{\mathcal{D}\left(A_{0}^{\gamma}\right)} \leq m 
\\ \text { and } 
\\ \text { (ii) there exists a constant } \varepsilon \in\left(0, \frac{3}{4}\right) \text { such that } 
\\ \|g(u,\cdot)\|_{H^{2 \varepsilon}(\Omega)} + \|g(\cdot,v)\|_{H^{2 \varepsilon}(\Omega)} \leq C_{g}(m) \quad \text { if }\|u\|_{\mathcal{D}\left(A_{0}^{\gamma}\right)},\|v\|_{\mathcal{D}\left(A_{0}^{\gamma}\right)} \leq m,
\end{array}\right.
$$
along with initial values $a,b$ satisfying $\|a\|_{\mathcal{D}\left(A_{0}^{\gamma}\right)}, \|b\|_{\mathcal{D}\left(A_{0}^{\gamma}\right)}\leq m$.  After expressing solutions $u(t)$ and $v(t)$ as
$$
\begin{aligned}
u(t)=S(t)a+\int_0^t K(t-s) \left(p_0 u -g(u(s),v(s))\right) \mathrm{d}s, \\
v(t)=S(t)b+\int_0^t K(t-s) \left(p_0 v + g(u(s),v(s))\right) \mathrm{d}s,
\end{aligned}
$$
we can apply the contraction theorem similarly to the approach demonstrated in \cite{LY2}. In this situation, there exists a constant $T=T(m)>0$ such that the solutions to the system \eqref{Eq:nonlin_system_formu} in $L^2(0,T;H^2(\Omega)) \cap C([0,T];\mathcal{D}(A^{\gamma}_0))$ are unique.

%%%%%%%%%% Section 7 %%%%%%%%%%%%%%%%%%%%%%%%%%%%%%%%%%%%%%%%%
\section{Appendix. Proof of Lemma \ref{Lm:non_nega}} 
\setcounter{section}{7} \setcounter{equation}{0} %% do not forget! %%

Since $1 < \kappa < \infty$, the embedding $L^\infty(Q) \hookrightarrow L^\kappa(Q)$ holds, thus we have $c \in L^\kappa(Q)$. Furthermore, given that $C_0^\infty(Q)$ is dense in $L^\kappa(Q)$, there exist functions $\{c_n\} \in C_0^\infty(Q)$ such that $\lim_{n \to \infty} c_n = c$ in $L^\kappa(Q)$.

We define the operator $A_n$ regarding $c_n(x,t)$ in $L^2(\Omega)$ similar to \cite{LY1} as
$$
(A_n v)(x)=- \Delta v(x)-c_n(x,t) v(x), \quad x \in \Omega.
$$
Correspondingly, the operators $S(t)$ and $K(t)$ related to $\left(-A_{0} v\right)(x):= \Delta v(x)-c_{0} v(x)$ with an arbitrary constant $c_0>0$ remain the same
$$
S(t) a=\sum_{n=1}^{\infty} E_{\alpha, 1}\left(-\mu_{n} t^{\alpha}\right)\left(a, \varphi_{n}\right) \varphi_{n}, \quad a \in L^{2}(\Omega), t>0, \\
$$
and
$$
 K(t) a=\sum_{n=1}^{\infty} t^{\alpha-1} E_{\alpha, \alpha}\left(-\mu_{n} t^{\alpha}\right)\left(a, \varphi_{n}\right) \varphi_{n}, \quad a \in L^{2}(\Omega), t>0.
$$
Additionally, the operators in \cite[(2.14)]{LY1} are updated to
$$
\begin{array}{l}
G(t):=\int_{0}^{t} K(t-s) F(s) d s+S(t) a,
\\ 
Q_n v(t)=Q_n(t) v(t):=\left(c_{0}+c_n(\cdot, t)\right) v(t).
\end{array}
$$
In accordance on the previous modifications, the solution presented in \cite[(4.3)]{LY1} has been updated to reflect the changes in the operators $A_n$ and $Q_n$ as defined above. The updated solution is given by
$$
u_n(F,a)(t)=G(t)+\int_0^t K(t-s)Q_n u_n(s)\mathrm{d}s,\quad 0<t<T.
$$
Since $c_n\in C_0^{\infty}(Q)$, according to \cite[Theorem 2]{LY1}, we know that $u_n(F,a)(x,t)\geq 0$ in $\Omega \times (0,T)$ and that $u_n(F,a) \in L^2(0,T;H^2(\Omega))$ as well as $u_n(F,a) - a \in H_{\alpha}(0,T;L^2(\Omega))$.
Following a similar approach as in \cite{LY1}, we can extract a subsequence ${u_{n^{\prime}}(F,a)}$ from ${u_{n}(F,a)}$ such that ${u_{n^{\prime}}(F,a)} \to {u(F,a)}$ in $L^2(0,T;H^2(\Omega))$ and ${u_{n^{\prime}}(F,a)} - a \to {u(F,a)} - a$ in $H_{\alpha}(0,T;L^2(\Omega))$. Finally, $u_n(F,a)(x,t)\geq 0$ implies that $u(F,a)\geq 0$ in $Q$.

%%=============================================================%%
%% Sample for another appendix section			       %%
%%=============================================================%%

%% \section{Example of another appendix section} \label{secA2}%

%%%%%%%%%%%%%%%%%%%%%%%%%% BACK MATTERS %%%%%%%%%%%%%%%%%%%%%%%%%%%%

\begin{acknowledgements}
 % Acknowledgments are not compulsory. Where included they should be brief.
 % Grant or contribution numbers may be acknowledged, or help by colleagues. Example:
D. Feng is supported by Key-Area Research and Development Program of Guangdong Province (No.2021B0101190003) and Science and Technology Commission of Shanghai Municipality (23JC1400501). M. Yamamoto is supported partly by Grant-in-Aid for 
Scientific Research (A) 20H00117 
and Grant-in-Aid for Challenging Research (Pioneering) 21K18142 of 
Japan Society for the Promotion of Science.
 \end{acknowledgements}

 \section*{\small
 Conflict of interest} %%%%%%%%%%%%%%%%%%%%

 {\small
 The authors declare that they have no conflict of interest.}

%%%%%%%%%%%%%%%%%% REFERENCES: %%%%%%%%%%%%%%%%%%%%%%%%%%%%%%%%%%%%%%%%%%%%
%% BibTeX users: please use \bibliographystyle{spmpsci} %% for math. and phys. sci.
%% Non-BibTeX users: please use the model as below !!! %%

%%%% for FCAA - pls. include directly the Refs items here ! %%%
%%%% following STRICTLY the models below %%%%%
%%%% and ARRANGE the items in ALPHABETIC ORDER for authors' family names !!!

 %%%%%%%%%%%%%%%%%%%%%%%%%%%%%%

\bigskip  %%%%%%%%%%%%%%%%%%%%%%%%%%%%%%%%%

\small %%%
\noindent
{\bf Publisher's Note}
Springer Nature remains neutral with regard to jurisdictional claims in published maps and institutional affiliations.


\begin{thebibliography} {99}
 \normalsize %% to be better seen by reviewers, until final proofs ... %%
\bibitem{Ad}
Adams R A 1975 {\it Sobolev Spaces} (New York: Academic Press)

\bibitem{BL}
Bergh J and L{\"o}fstr{\"o}m J 1976
Interpolation Spaces: An Introduction
(Springer-Verlag: Berlin)

\bibitem{BP}
Baranwal V K, Pandey R K, Tripathi M P and Singh O P 2012
An analytic algorithm for time fractional nonlinear reaction-diffusion 
equation based on a new iterative method
{\it Communications in Nonlinear Science and Numerical Simulation} {\bf 17(10)}
3906-3921

\bibitem{FLY}
Floridia G, Liu Y and Yamamoto M 2023
Blow-up in $L^ 1 (\Omega) $-norm and global existence for time-fractional diffusion equations with polynomial semilinear terms
{ \it Advances in Nonlinear Analysis}
{\bf 12(1)} 20230121

\bibitem{GLY}
Gorenflo R, Luchko Y and Yamamoto M 2015  
Time-fractional diffusion equation in the fractional Sobolev spaces
{\it Fract. Calc. Appl. Anal.} {\bf 18} 799-820

\bibitem{Kla}
Klausmeier C A 1999
Regular and irregular patterns in semiarid vegetation
{\it Science} {\bf 284(5421)} 1826-1828

\bibitem{KRY}
Kubica A, Ryszewska K and Yamamoto M 2020  
{\it Theory of Time-fractional Differential Equations
An Introduction} (Tokyo: Springer)

\bibitem{LHL}
Li Z, Huang X and Liu Y 2023
Initial-boundary value problems for coupled systems of time-fractional 
diffusion equations 
{\it Fractional Calculus and Applied Analysis} {\bf 26(2)} 533-566

\bibitem{LY}
Luchko Y and Yamamoto M 2017 
On the maximum principle for a time-fractional diffusion equation
{\it Fract. Calc. Appl. Anal.} {\bf 20} 1131-1145

\bibitem{LY1}
Luchko Y and Yamamoto M 2023
Comparison principles for the time-fractional diffusion equations with the 
Robin boundary conditions. Part I: Linear equations 
{\it Fract. Calc. Appl. Anal.} {\bf 26} (2023) no.4 1504-1544,
correction: {\it Fract. Calc. Appl. Anal.} {\bf 26} no. 6, 2959-2960

\bibitem{LY2}
Luchko Y and Yamamoto M 2022
Comparison principles for the linear and semiliniar time-fractional diffusion 
equations with the Robin boundary condition 
{\it arXiv preprint arXiv:2208.04606}

\bibitem{Pa}
Pazy A 1983 {\it Semigroups of Linear Operators and Applications to Partial 
Differential Equations} (Berlin: Springer)

\bibitem{Pi}
Pierre M 2003
Weak solutions and supersolutions in $L^1$ for reaction-diffusion systems
{\it J. Evol. Equ.} {\bf 3} 153-168
%{\it Nonlinear Evolution Equations and Related Topics: Dedicated to Philippe 
%Bénilan}, 153-168.

\bibitem{Pi2}
Pierre M 2010
Global existence in reaction-diffusion systems with control of mass: 
a survey {\it Milan Journal of Mathematics} {\bf 78} 417-455

\bibitem{SY}
Sakamoto K and Yamamoto M 2011 Initial value/boundary value problems for 
fractional diffusion-wave equations and applications to some inverse problems
{\it J. Math. Anal. Appl.} {\bf 382} 426-447

\bibitem{Si}
Simon J 1986
Compact sets in the space $L^p(0,T;B)$
{\it Annali di Matematica pura ed applicata} {\bf 146} 65-96

\bibitem{WS}
Wang X, Shi J and Zhang G 2021
Bifurcation and pattern formation in diffusive Klausmeier-Gray-Scott model of 
water-plant interaction. 
{\it J. Math. Anal. Appl.} {\bf 497(1)} 124860

\bibitem{Y4}
Yamamoto M 2022 Fractional calculus and time-fractional differential equations: revisit and construction of a theory {\it Mathematics} 
{\bf 10} 698

%%%%%%%%%%%%%%%%%%%%%%%%%%%%%%%%%%%%%%%%%%%%%%%%%%%%

\end{thebibliography}
\end{document}